\documentclass[11pt,a4paper]{article}
\usepackage[T1]{fontenc}
\usepackage{lmodern}
\usepackage{amsmath,amssymb,amsthm,mathtools}
\usepackage[margin=27mm]{geometry}
\usepackage{microtype}
\usepackage{enumitem}
\usepackage{authblk}
\usepackage{cite}
\usepackage[hidelinks,hypertexnames=false]{hyperref}
\numberwithin{equation}{section}
\newtheorem{theorem}{Theorem}[section]
\newtheorem{lemma}[theorem]{Lemma}
\newtheorem{proposition}[theorem]{Proposition}
\newtheorem{corollary}[theorem]{Corollary}
\theoremstyle{remark}
\newtheorem*{remark}{Remark}
\newcommand{\GraphSys}{\mathbf{G}}
\newcommand{\Ec}{E_{\cap}}
\newcommand{\col}{\operatorname{col}}
\newcommand{\sqcupdot}{\mathbin{\dot\cup}}
\setlist[enumerate]{label=(\roman*),itemsep=3pt,topsep=4pt}
\title{Local Ore Conditions for Rainbow Pancyclicity and Vertex-Pancyclicity in Graph Systems}
\author[1]{Ailian Chen\thanks{Corresponding author: \texttt{elian1425@fzu.edu.cn}.}}
\author[1]{Jiahuang Lin\thanks{Email: \texttt{2182889071@qq.com}.}}
\affil[1]{School of Mathematics and Statistics, Fuzhou University,\\
Fuzhou 350108, China}
\date{}
\hypersetup{
 pdftitle={Local Ore Conditions for Rainbow Pancyclicity and Vertex-Pancyclicity in Graph Systems},
 pdfauthor={Ailian Chen and Hao Chen},
 pdfsubject={Local Ore conditions, rainbow pancyclicity, and vertex-pancyclicity in graph systems},
 pdfkeywords={graph system, rainbow cycle, vertex-pancyclicity, pancyclicity, local Ore condition, rainbow distance}
}

\begin{document}
\maketitle
\vspace{-1.0em}

\begin{abstract}
Let $\GraphSys=(G_1,\ldots,G_n)$ be a graph system on a common $n$-vertex set $V$.  Put
\[
 \delta(v)=\min_{i\in[n]}d_{G_i}(v),\qquad
 E_{\cap}=\bigcap_{i\in[n]}E(G_i),
\]
and let $d^c(u,v)$ denote rainbow distance.  Suppose that every $G_i$ is $2$-connected and that
\[
 d^c(u,v)\le2,\quad \delta(u)+\delta(v)<n
 \quad\Longrightarrow\quad uv\in E_{\cap}
\]
for all distinct $u,v\in V$.  We prove that either $n$ is even and
$G_1=\cdots=G_n=K_{n/2,n/2}$ with a common bipartition, or $\GraphSys$ is rainbow pancyclic and
$[4,n]$-rainbow vertex-pancyclic.  The lower endpoint $4$ is best possible.  We also prove a local analogue of the stronger Ore-type vertex-pancyclicity theorem: replacing the cutoff $n$ by
$4n/3-1$ forces full rainbow vertex-pancyclicity, and this cutoff is best possible for all orders
$n\equiv1\pmod3$.  The proof is based on prescribed-color cycle
extensions, endpoint crossing, and neighborhood separation.  In the rooted setting these tools are
combined with a maximal-root argument and with separate analyses of the $4$- and $(n-1)$-cycle
boundaries.
\end{abstract}

\medskip
\noindent\textbf{Keywords:} graph system; rainbow cycle; pancyclicity; vertex-pancyclicity; local Ore condition; rainbow distance.

\medskip
\noindent\textbf{2020 Mathematics Subject Classification:} 05C38; 05C35; 05C07.

\section{Introduction}

All graphs in this paper are finite, simple and undirected.  Dirac's theorem~\cite{Dirac} and Ore's theorem~\cite{Ore} are two of the basic degree criteria for
Hamiltonicity.  Bondy~\cite{Bondy} showed that the Dirac threshold already forces pancyclicity,
apart from the balanced complete bipartite graph, and subsequently formulated the broader
Hamiltonicity-to-pancyclicity principle~\cite{BondyRecent}.  Local forms of Ore-type conditions
have also been studied extensively.  Hasratian and Khachatrian~\cite{HasratianKhachatrian} and
Asratian, Broersma, van den Heuvel and Veldman~\cite{AsratianBroersmaHeuvelVeldman} obtained
Hamiltonicity results from conditions imposed only on short-distance pairs.  In the pancyclic
direction, Asratian and Sarkisian~\cite{AsratianSarkisian} proved, under a local Ore-type
hypothesis, that every vertex lies on a cycle of each length from $4$ to the order of the graph,
unless the graph is complete bipartite.  Their result is a principal ordinary-graph motivation for
the present work.

We work with an indexed family $\GraphSys=(G_1,\ldots,G_n)$ of graphs on a common $n$-vertex
set $V$, regarding the index as a color.  A graph $H$ with $V(H)\subseteq V$ is \emph{rainbow} in
$\GraphSys$ if there is an injection $\varphi:E(H)\to[n]$ such that
$e\in E(G_{\varphi(e)})$ for every $e\in E(H)$.  Thus a rainbow Hamilton cycle is a Hamilton
transversal.  We say that $\GraphSys$ is \emph{rainbow pancyclic} if it contains a rainbow
$C_\ell$ for every $3\le \ell\le n$, and \emph{$[a,b]$-rainbow vertex-pancyclic} if every vertex
lies on a rainbow $C_\ell$ for every $\ell\in[a,b]$.

The transversal Dirac conjecture of Aharoni was first obtained asymptotically by Cheng,
Wang and Zhao~\cite{ChengWangZhao}; Joos and Kim~\cite{JoosKim} proved the exact threshold
$\delta(G_i)\ge n/2$.  Cheng, Wang and Zhao also proved the existence of rainbow cycles of every
length $3,\ldots,n-1$ under the stronger bound $(n+1)/2$.  Li, Li and Li studied rainbow
vertex-pancyclicity, panconnectivity and Ore-type Hamilton paths~\cite{LiLiLi2023}, and later proved
the Dirac-type rainbow pancyclicity theorem~\cite{LiLiLi}.  The bipartite setting was treated by
Bradshaw~\cite{Bradshaw}, who obtained transversal bipancyclicity under sharp degree conditions;
Hu, Li, Li and Xu~\cite{HuLiLiXu} subsequently established vertex-bipancyclicity under a
Dirac-type condition for bipartite graph collections.  Rainbow triangles already display genuinely
multicolored extremal behavior; see the rainbow
Mantel theorem of Aharoni, DeVos, Gonz\'alez Hermosillo de la Maza, Montejano and
\v{S}\'amal~\cite{AharoniMantel}.  For further developments on transversal spanning structures we
refer to the survey of Sun, Wang and Wei~\cite{SunWangWeiSurvey}.

There are two distinct Ore-type directions for graph systems.  A layerwise condition requires
\[
 d_{G_i}(u)+d_{G_i}(v)\ge n\qquad\text{whenever }uv\notin E(G_i).
\]
Li, Li and Li~\cite{LiLiLi2023} used such hypotheses for rainbow Hamilton paths, and Li and
Luo~\cite{LiLuo} recently obtained a rainbow Hamiltonian-connectedness alternative under the
layerwise Ore condition.  A stronger cross-layer condition was introduced by Liu, Chen and
Ma~\cite{LiuChenMa}.  For
\[
 \delta(v)=\min_{i\in[n]}d_{G_i}(v),\qquad
 \Ec=\bigcap_{i\in[n]}E(G_i),
\]
their condition $\sigma(\GraphSys)\ge n$ is equivalently
\begin{equation}\label{eq:global-ore}
 uv\notin\Ec\quad\Longrightarrow\quad
 \delta(u)+\delta(v)\ge n.
\end{equation}
They proved rainbow Hamiltonicity under this hypothesis.  Li, Wang and Yan~\cite{LiWangYan}
then proved the corresponding pancyclic and vertex-pancyclic results.  Chen and
Zhang~\cite{ChenZhang} obtained a Bondy-type theorem from an aggregate system-degree condition.
Related recent work includes transversal panconnectivity~\cite{SunWangWeiPanconnected,MaYouZhang},
rainbow chorded pancyclicity~\cite{MaCai}, and edge-count and spectral criteria for rainbow
pancyclicity due to You, Zhang and Zhao~\cite{YouZhangZhao}.

For distinct $u,v\in V$, let $d^c(u,v)$ be the length of a shortest rainbow $u$--$v$ path, with
$d^c(u,v)=\infty$ if no such path exists.  We localize~\eqref{eq:global-ore} as follows:
\begin{equation}\tag{LO}\label{eq:LO}
 d^c(u,v)\le2,\qquad \delta(u)+\delta(v)<n
 \quad\Longrightarrow\quad uv\in\Ec.
\end{equation}
An edge in $\Ec$ will be called \emph{common}.  The case $d^c(u,v)=1$ is part of the hypothesis,
since an edge present in one member need not be common.  In a diagonal system
$G_1=\cdots=G_n=G$, condition~\eqref{eq:LO} becomes
\[
 d_G(u)+d_G(v)\ge n\qquad\text{whenever }d_G(u,v)=2,
\]
which is a standard local Ore condition.  Rainbow distance has also appeared as the natural
metric in transversal panconnectivity~\cite{SunWangWeiPanconnected,MaYouZhang}; here it is used
to localize the Ore hypothesis itself.

Our first theorem gives the local analogue of the cross-layer Ore pancyclicity result.

\begin{theorem}\label{thm:main}
Let $n\ge4$, and let $\GraphSys=(G_1,\ldots,G_n)$ be a graph system on a common $n$-vertex set
$V$.  Suppose that every $G_i$ is $2$-connected and that $\GraphSys$ satisfies~\eqref{eq:LO}.
Then one of the following holds:
\begin{enumerate}
\item $\GraphSys$ is rainbow pancyclic and $[4,n]$-rainbow vertex-pancyclic;
\item $n$ is even and there is a partition $V=X\sqcupdot Y$, with $|X|=|Y|=n/2$, such that
      $G_i=K_{X,Y}$ for every $i\in[n]$.
\end{enumerate}
\end{theorem}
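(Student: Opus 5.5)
We argue by cycle extension. The key tool is a \emph{prescribed-color extension lemma}. Suppose $C=x_1x_2\cdots x_\ell x_1$ is a rainbow cycle with $\ell<n$ using color set $S$, and we want a rainbow $C_{\ell+1}$ through a fixed vertex. Fix a vertex $w\notin V(C)$ that has a neighbor on $C$; such a $w$ exists because each $G_i$ is connected. Pick colors $a,b\notin S$, which is possible since $|S|=\ell\le n-1$ and we have $n$ colors, so spare colors exist when $\ell\le n-2$. The boundary case $\ell=n-1$ is treated separately below.

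If $w$ is adjacent in $G_a$ to $x_j$ and in $G_b$ to $x_{j+1}$, we insert $w$ and are done. Otherwise we use \eqref{eq:LO} through \emph{endpoint crossing}. Say $wx_j\in E(G_a)$. The edge $x_jx_{j+1}$ carries some color $c\in S$, so $w x_j x_{j+1}$ is a rainbow path of length $2$. Hence $d^c(w,x_{j+1})\le 2$, and either $wx_{j+1}\in\Ec$ or $\delta(w)+\delta(x_{j+1})\ge n$.

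In the first case $wx_{j+1}$ lies in every $G_i$, in particular in $G_b$, and we insert $w$ between $x_j$ and $x_{j+1}$. In the second case the large degree sum is used in a Bondy/Ore crossing count. Partition the neighbors of $w$ and of $x_{j+1}$ in a suitable layer, namely one of the unused colors, along $C$ and along $V\setminus V(C)$. A standard pigeonhole over consecutive pairs $(x_i,x_{i+1})$ then produces either a longer rainbow cycle or a rotation, which swaps one colored edge for an unused color. For this to work, the colors used on $C$ must be kept flexible. We will therefore maintain the invariant that the rainbow cycle can be recolored so that any chosen single edge receives any chosen spare color. This invariant holds because at most $\ell$ colors are in use.

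Pancyclicity is obtained from a rainbow Hamilton cycle together with these extensions. Following Bondy, we first get a rainbow triangle or show bipartiteness. Suppose no rainbow triangle exists. Then \eqref{eq:LO} forces every pair at rainbow distance two to have degree sum at least $n$ in every layer. A counting argument on a vertex $v$ with its neighborhoods $N_{G_i}(v)$ then shows the following: the neighborhoods are independent in all layers (neighborhood separation), so all $G_i$ are bipartite with the same parts, and the degree condition forces every $G_i=K_{X,Y}$ with $|X|=|Y|=n/2$. This gives outcome (ii).

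Otherwise we start from a short rainbow cycle and extend length by length. The extension step above yields $C_{\ell+1}$ from $C_\ell$ with the same vertex retained. For vertex-pancyclicity, we fix a root $v$ and take a maximal $\ell$ such that $v$ lies on rainbow cycles of all lengths $4,\dots,\ell$ (the maximal-root argument). We then show $\ell=n$ by the extension. A rainbow $C_4$ through $v$ is produced directly: $v$ has two neighbors $u,u'$ in distinct colors, so $d^c(u,u')\le2$, and $2$-connectivity supplies a second path from $u$ to $u'$ avoiding $v$. The value $4$ cannot be lowered, since $v$ may lie in no triangle, as in the local Ore examples.

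\textbf{Main obstacle.} The hardest part is the boundary $\ell=n-1\to n$. Here only one spare color remains and $w$ is unique, so we cannot pick two fresh colors for the inserted edges. We must instead reuse a color freed by rerouting: we find $x_j$ with $wx_j,wx_{j+1}$ available in colors $a$ and $c$, where $c$ is the color of $x_jx_{j+1}$. This needs a careful case analysis on $N(w)\cap V(C)$, using \eqref{eq:LO} for the pairs $(w,x_{j\pm1})$, and it is where the extremal bipartite configuration must be excluded. I would first try to derive a rainbow Hamilton cycle directly from the Hamiltonicity machinery with the degree sum replaced locally, and then treat only vertex-rooted $C_{n-1}$ separately.
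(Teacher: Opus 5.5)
\textbf{There is a genuine gap: your one-vertex extension step does not work.}

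\emph{The extension $C_\ell\to C_{\ell+1}$.} Run the crossing you describe in the case $wx_j\in E(G_a)$ and $\delta(w)+\delta(x_{j+1})\ge n$. The relevant object is the path $P=w\,x_j\,x_{j-1}\cdots x_{j+2}\,x_{j+1}$ on $\ell+1$ vertices. Its ends can use the released color $c$ and a spare color $b$. If there is no rainbow $C_{\ell+1}$, Lemma~\ref{lem:path} only produces $q\notin V(P)$ with $wq\in E(G_c)$ and $qx_{j+1}\in E(G_b)$. That is a rainbow $C_{\ell+2}$ (the paper's Lemma~\ref{lem:transfer}), not a $C_{\ell+1}$.

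More decisively, your step never uses a triangle or any non-bipartiteness. Every hypothesis it invokes holds for $G_1=\cdots=G_n=K_{n/2,n/2}$ with $n\ge6$ even, where no rainbow $C_4$ extends to a $C_5$. So the step cannot be correct as stated.

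The missing idea is a contradiction-and-rigidity argument. Assume a rainbow $C_m$ but no $C_{m+1}$, with $m\le n-3$ so that three unused colors exist. Then:
\begin{enumerate}
\item every cycle edge admits prescribed two-vertex detours;
\item neighborhood separation $N_a(x)\cap N_b(y)=\varnothing$ holds along cycle edges, and $\delta(x_i)+\delta(x_{i+2})\ge n$;
\item hence $\delta(v)\ge n/2$ for all $v$, which is the global Ore condition, and the Li--Wang--Yan theorem takes over.
\end{enumerate}
With a root $r$, separation only gives $N_a(x)\cap N_b(y)\subseteq\{r\}$. The paper's maximal-root argument maximizes $\delta(x_1)+\delta(x_{m-1})$ over $r$-rooted $C_m$'s precisely to remove this defect. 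Your ``maximal $\ell$'' is plain induction and never explains why the root survives the extension.

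\emph{Supporting claims.} Several of these are also false or unsupported.
\begin{enumerate}
\item The recoloring invariant fails. An edge of $C$ may lie in only one member, and having spare colors does not put it in $G_f$. The paper gets such flexibility only inside Lemma~\ref{lem:rooted-cn1}, as a consequence of assuming no rooted $C_{n-1}$.
\item For the rooted $C_4$, $2$-connectivity gives a $u$--$u'$ path avoiding $v$ of uncontrolled length. If $uu'$ lies in all but at most one of the remaining colors, \eqref{eq:LO} gives no usable degree bound. The paper needs K\H{o}nig's theorem on the color--neighbor incidence graph of the root, forcing $N_{G_i}(r)=\{p,q\}$ for all $i$, followed by a separate contradiction (Lemma~\ref{lem:rooted-c4}).
\item For the triangle, absence of rainbow triangles does not make $N_{G_i}(v)$ independent, since the edges $vu,vu'$ may both lie only in $G_i$. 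Nothing in your sketch yields a common bipartition. The paper instead sums $\delta(v_i)+\delta(v_{i+2})\ge n$ around a rainbow Hamilton cycle to get $\sum_v\delta(v)\ge n^2/2$, then invokes the Chen--Zhang aggregate Bondy theorem.
\item The step $\ell=n-1\to n$ is just Hamiltonicity, which is imported as Theorem~\ref{thm:ham-input}. The boundary that genuinely needs its own argument is the rooted $C_{n-1}$, because the extension machinery stops at $m=n-3$. You leave this case untreated.
\end{enumerate}
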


For $n=3$, every $2$-connected member is $K_3$, so the pancyclic conclusion is immediate.  The
restriction to $[4,n]$ in the vertex-specific conclusion is unavoidable: Proposition~\ref{prop:vertex-sharp}
constructs $2$-connected diagonal systems satisfying~\eqref{eq:LO} in which some vertex lies on no
triangle.

A stronger local cutoff recovers triangles through every prescribed vertex.  Set
\begin{equation}\tag{$\mathrm{LO}_{4/3}$}\label{eq:LO-43}
 d^c(u,v)\le2,\qquad
 \delta(u)+\delta(v)<\frac{4n}{3}-1
 \quad\Longrightarrow\quad uv\in\Ec.
\end{equation}

\begin{theorem}\label{thm:local-43}
Let $n\ge3$, and let $\GraphSys=(G_1,\ldots,G_n)$ be a graph system on a common $n$-vertex set
$V$.  If every $G_i$ is $2$-connected and $\GraphSys$ satisfies~\eqref{eq:LO-43}, then
$\GraphSys$ is rainbow vertex-pancyclic.
\end{theorem}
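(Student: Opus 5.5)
The plan is to reduce to Theorem~\ref{thm:main} and handle only what it leaves open: the triangle length $\ell=3$ at every vertex, the bipartite alternative~(ii), and small orders. For $n=3$ every $2$-connected member is $K_3$, so a rainbow triangle through each vertex is immediate; assume $n\ge4$. Since $\frac{4n}{3}-1\ge n$ for $n\ge3$, condition~\eqref{eq:LO-43} implies~\eqref{eq:LO}, so Theorem~\ref{thm:main} applies. Alternative~(ii) is excluded directly. If $G_i=K_{X,Y}$ for all $i$ with $|X|=|Y|=n/2$, take $u,v\in X$. Then $d^c(u,v)=2$, via a common neighbour in $Y$ using two distinct colors, and $\delta(u)+\delta(v)=n<\frac{4n}{3}-1$ once $n>3$. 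Yet $uv\notin\Ec$, contradicting~\eqref{eq:LO-43}. So $\GraphSys$ is $[4,n]$-rainbow vertex-pancyclic, and it remains to show that every vertex $v$ lies on a rainbow triangle.

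Fix $v$. By $2$-connectivity, $d_{G_i}(v)\ge2$ for all $i$. I split according to whether the common neighbourhood structure at $v$ is dense.

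\textbf{Case A: $v$ has two neighbours $x,y$ with $x\in N_{G_a}(v)$, $y\in N_{G_b}(v)$, $a\ne b$, and $xy\notin\Ec$.} Then $d^c(x,y)\le2$ via $x v y$, so~\eqref{eq:LO-43} gives $\delta(x)+\delta(y)\ge\frac{4n}{3}-1$. Choose a third color $c\notin\{a,b\}$ with $xy\in E(G_c)$ if one exists; that yields the rainbow triangle $vxy$. Otherwise $xy$ lies in at most two members. In that event I pass to common neighbours: $x$ and $y$ each have at least $\delta(x),\delta(y)$ neighbours in every layer, so $|N_{G_i}(x)\cap N_{G_j}(y)|\ge \delta(x)+\delta(y)-n\ge \frac n3-1$. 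I then look for $w$ with $vw$, $wx$ (or $wy$) available in distinct colors, replacing $y$ by $w$ and iterating. The counting slack $n/3$ must be large enough to avoid the at most $O(1)$ colors already used and the vertex $v$.

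\textbf{Case B: every such pair $xy$ is common.} Any two neighbours of $v$ from different layers are then adjacent in all layers. Pick $x\in N_{G_1}(v)$ and $y\in N_{G_2}(v)$ distinct, which is possible since each degree is at least $2$, and color $vx,vy,xy$ by $1,2,3$.

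The main obstacle is Case A when $xy$ appears in few layers. There the bound $\frac{4n}{3}-1$ has to be converted into a vertex $w$ and three distinct colors. I would try first to apply~\eqref{eq:LO-43} also to the pairs $(v,x)$ and $(v,y)$ when they are non-common, obtaining $\delta(v)$ large. Summing the three Ore inequalities gives $\delta(v)+\delta(x)+\delta(y)\ge 2n-\tfrac32$, and inclusion--exclusion then forces a common neighbour $w$ of $v$ and $x$ whose two edges are available in many colors. Since $n$ colors are available, the distinctness of colors is then routine. Sharpness for $n\equiv1\pmod3$ suggests that exactly this triple-sum computation is the tight step.
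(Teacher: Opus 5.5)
There is a genuine gap in your proposal: the rooted triangle in the configuration where all edges at $v$ are common is never established.

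Your reduction to Theorem~\ref{thm:main}, the exclusion of alternative~(ii), and Case~B are correct. So is the easy part of Case~A. If some edge $vx$ lying in a member $G_a$ is not common, then $d^c(v,x)=1$ and~\eqref{eq:LO-43} give $\delta(v)+\delta(x)>n$. Hence $N_j(v)\cap N_k(x)\neq\varnothing$ for distinct $j,k\neq a$, and a rainbow triangle through $v$ follows at once.

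The gap is a configuration that your Case~A contains but never handles. Suppose every edge at $v$ occurring in some member is common. Then $N=N_{G_i}(v)$ does not depend on $i$. We may also assume $N$ is independent in every $G_i$, since an edge $uu'\in E(G_k)$ inside $N$ closes a rainbow triangle once the common edges $vu,vu'$ get two colors other than $k$. In this configuration there is no triangle through $v$ at all. So no search for common neighbours of $x,y$ can succeed (those only give $4$-cycles $vxwy$). Neither can a search for $w$ with $vw,wx$ in distinct colors, or any iteration. What is needed is a proof that the configuration contradicts the hypotheses.

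Your triple-sum inequality is unavailable here, because it applies~\eqref{eq:LO-43} to $(v,x)$ and $(v,y)$, which are now common edges. This is exactly the structure of the extremal example in Proposition~\ref{prop:local-43-sharp}: the root there has common edges and independent neighbourhood $Y$. So this case, not the triple sum, is where the cutoff is tight.

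The missing idea uses $2$-connectivity beyond $d_{G_i}(v)\ge2$. Put $d=|N|=\delta(v)$.
\begin{enumerate}
\item For distinct $u,u'\in N$, the path $uvu'$ is rainbow and $uu'$ lies in no member, so $\delta(u)+\delta(u')\ge\frac{4n}{3}-1$. Since $N$ is independent, every $u\in N$ has $d_{G_i}(u)\le n-d$. Hence $d\le\frac n3+\frac12$, and $S=V\setminus(N\cup\{v\})\neq\varnothing$.
\item Fix a color $i$. As $G_i-v$ is connected, there is an edge $uy\in E(G_i)$ with $u\in N$ and $y\in S$. Coloring the common edge $vu$ with a color other than $i$ makes $vuy$ rainbow, and $vy$ lies in no member.
\item Therefore $\delta(y)\ge\frac{4n}{3}-1-d\ge n-\frac32$, so $\delta(y)\ge n-1$. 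This forces $y\in N$, a contradiction.
\end{enumerate}
This argument, or an equivalent one, is the whole content of the theorem beyond Theorem~\ref{thm:main}.
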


Since degree sums are integral, hypothesis~\eqref{eq:LO-43} may equivalently be written as
\[
 d^c(u,v)\le2,\qquad
 \delta(u)+\delta(v)\le \left\lfloor\frac{4n-4}{3}\right\rfloor
 \quad\Longrightarrow\quad uv\in\Ec.
\]
The cutoff is best possible in the natural integer sense for every $n\equiv1\pmod3$; see
Proposition~\ref{prop:local-43-sharp}.  Thus Theorem~\ref{thm:local-43} is the local counterpart of the stronger vertex-pancyclicity theorem
of Li, Wang and Yan~\cite{LiWangYan}.

The proof of Theorem~\ref{thm:main} has two components.  For nonspanning cycles we develop a
prescribed-color extension mechanism.  In the unrooted setting, the absence of one consecutive
cycle length forces complementary endpoint neighborhoods to be disjoint; the resulting degree
identities give a rigidity theorem.  With a prescribed root, the only possible failure of
neighborhood separation is concentrated at that root.  A maximal choice of the two root-neighbor
degrees removes this defect and yields the cross-layer Ore threshold.  The boundary lengths require
separate arguments: a color--neighbor incidence graph and K\H{o}nig's theorem give a rooted
$C_4$, while a rooted $(n-1)$-cycle follows from color transpositions on a Hamilton cycle and the
equality case of the endpoint-crossing inequality.  Under~\eqref{eq:LO-43}, Theorem~\ref{thm:main}
already supplies all rooted lengths $4,\ldots,n$; the stronger cutoff is needed only for the rooted
triangle.

The Hamiltonicity input is taken from the companion paper~\cite{ChenChen}, where a local Fan-type
obstruction theorem is proved and~\eqref{eq:LO} is obtained as a Hamiltonicity criterion; the
terminology refers to Fan's classical degree condition~\cite{Fan}.  The present paper uses only the
Hamiltonicity consequence, quoted as Theorem~\ref{thm:ham-input}.  Section~\ref{sec:rigidity} develops the extension and separation
lemmas and proves the unrooted rigidity theorem.  Section~\ref{sec:rooted} treats the rooted
rigidity and the two boundary lengths.  Section~\ref{sec:main-proof} proves
Theorems~\ref{thm:main} and~\ref{thm:local-43} and gives the extremal constructions.

\section{Cycle extensions and degree rigidity}\label{sec:rigidity}

\subsection{Preliminaries}\label{sec:prelim}

For a color $a$ and a set $S\subseteq V$, write
\[
 N_a(v,S)=N_{G_a}(v)\cap S,\qquad d_a(v,S)=|N_a(v,S)|.
\]
We abbreviate $N_a(v,V)$ and $d_a(v,V)$ to $N_a(v)$ and $d_a(v)$.
Whenever a rainbow path or cycle is fixed, its edge-color assignment is
fixed as well. Its used colors form the set $\col(H)$; the remaining
colors are called \emph{unused}. All retained edges keep their assigned
colors unless a change is explicitly specified. Length means the number
of edges; a path on $k$ vertices therefore has length $k-1$.

We use the following three input results.

\begin{theorem}[Chen and Chen~\cite{ChenChen}]
\label{thm:ham-input}
Let $n\ge3$, and let $\GraphSys=(G_1,\ldots,G_n)$ be a graph system on a common
$n$-vertex set. If every member is $2$-connected and $\GraphSys$ satisfies
\eqref{eq:LO}, then $\GraphSys$ contains a rainbow Hamilton cycle.
\end{theorem}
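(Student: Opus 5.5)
Theorem~\ref{thm:ham-input} is quoted from the companion paper~\cite{ChenChen}; nothing in the present paper is used to prove it. Still, here is the route I would take to prove it directly. It is a rainbow version of the Asratian--Khachatrian local Ore argument.

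The first step is to get a long rainbow cycle to work with. Each $G_i$ is $2$-connected, so $\delta(v)\ge2$ for every $v$, and a greedy argument gives a rainbow cycle. Among all rainbow cycles, choose $C$ of maximum length, and among those choose one that maximizes a secondary potential (for example the number of common edges used). Suppose $|C|<n$. Since $G_1$ is connected, there is a vertex $x\notin C$ adjacent in some color $a$ to a vertex $v_i\in C$. Because $|C|<n$, at least one color is unused.

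The second step is the local Ore argument. Orient $C$ and let $v_{i+1}$ be the successor of $v_i$. Then
\[
x\,v_i\,v_{i+1}
\]
is a rainbow path of length~$2$, as long as the color of $v_iv_{i+1}$ differs from $a$. If necessary I would swap in an unused color, using the fact that at least two colors are free once $|C|\le n-2$. This gives $d^c(x,v_{i+1})\le2$. If $xv_{i+1}\in\Ec$, we can replace the edge $v_iv_{i+1}$ by the path $v_i\,x\,v_{i+1}$, coloring $v_ix$ with $a$ and $xv_{i+1}$ with the freed color of $v_iv_{i+1}$. This is a longer rainbow cycle, contradicting the choice of $C$. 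So~\eqref{eq:LO} forces
\[
\delta(x)+\delta(v_{i+1})\ge n.
\]

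The third step is a crossing argument. For a fixed color $b$, the standard counting shows that if $v_j\in N_b(x)$, then the pair $v_{i+1},v_{j+1}$ cannot give rise to crossing chords. The usual Ore-type disjointness of $N(x)$ and $N^+(v_{i+1})$ on $C$, together with the $2$-connectivity of $G_1$ (which gives a second attachment vertex of the component of $G-C$ containing $x$), then leads to
\[
\delta(x)+\delta(v_{i+1})\le n-1,
\]
which is a contradiction. The rainbow bookkeeping is handled by reserving the unused colors for the newly created edges. The count $n-|C|$ of free colors matches exactly the number of vertices that have to be absorbed.

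I expect the main obstacle to be the color-assignment step. The hypothesis~\eqref{eq:LO} only constrains pairs at rainbow distance at most~$2$, so we must guarantee that $d^c(x,v_{i+1})\le2$ even when the edges of $C$ use up the relevant colors. We also need the degree condition in terms of $\delta$ to control the neighbourhoods inside every single layer. This is presumably why~\cite{ChenChen} goes through a Fan-type obstruction theorem rather than arguing directly.
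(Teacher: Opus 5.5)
The paper gives no proof of this statement. It is quoted from \cite{ChenChen}, where it is derived from a local Fan-type obstruction theorem, so your sketch has to stand on its own. Up to length $n-2$ it essentially does, after two repairs. First, the attaching edge $xv_i$ must carry an unused color $a$, found via connectivity of $G_a$ rather than $G_1$; otherwise the insertion is not rainbow. Second, the crossing count should compare $N_e(x)$ with $N_c(v_{i+1})$, where $e$ is the color released from $v_iv_{i+1}$ and $c\ne a$ is a second unused color. Then $N^+_e(x)\cap N_c(v_{i+1})\cap V(C)=\varnothing$, and the outside parts are disjoint inside $V\setminus(V(C)\cup\{x\})$. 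This gives $d_e(x)+d_c(v_{i+1})\le n-1$, which contradicts $\delta(x)+\delta(v_{i+1})\ge n$; $2$-connectivity is not needed here. However, this only shows that a longest rainbow cycle has at least $n-1$ vertices.

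The gap is the final absorption step, $|C|=n-1$.
\begin{itemize}
\item There is exactly one unused color $a$, and it is consumed by $v_ix$.
\item A crossing at position $j$ creates the three edges $v_ix$, $xv_j$, $v_{i+1}v_{j+1}$ and releases $e_i$ and $e_j$. So these edges must receive $a,e_i,e_j$ in some order, and $e_j$ changes with $j$.
\item Consequently the sets you want to be disjoint are not neighbourhoods in two fixed layers, and $\delta(x)+\delta(v_{i+1})\ge n$ yields no contradiction.
\item The outside part of the count is vacuous, since $x$ is the only vertex off $C$. A second attachment vertex from $2$-connectivity does not supply a new color either.
\end{itemize}
Your sentence that ``the count $n-|C|$ of free colors matches exactly the number of vertices that have to be absorbed'' is precisely where the argument breaks. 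Absorbing the last vertex creates two or three new edges, but only one of their colors can be free. The others must be released colors, which depend on where the cycle is cut. The paper's own endpoint-crossing Lemma~\ref{lem:path} is restricted to $k\le n-1$ for exactly this reason: it needs two distinct unused colors. That is why Hamiltonicity is imported rather than proved there. The secondary potential you introduce (number of common edges) is never used, and some genuinely new argument for this last case is missing. Examples would be a color-exchange argument on a rainbow Hamilton path, or the Fan-type machinery of \cite{ChenChen}.
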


Chen and Zhang~\cite{ChenZhang} prove the stronger aggregate threshold
$\sum_v\delta(v)\ge\lceil n^2/2\rceil-1$. We require only the following consequence.

\begin{theorem}[Aggregate-degree rainbow Bondy theorem~\cite{ChenZhang}]
\label{thm:bondy-input}
Let $\GraphSys=(G_1,\ldots,G_n)$ be a graph system on an $n$-vertex set $V$.
If $\GraphSys$ contains a rainbow Hamilton cycle and
\[
 \sum_{v\in V}\delta(v)\ge\frac{n^2}{2},
\]
then $\GraphSys$ is rainbow pancyclic unless $n$ is even and there is a common
balanced bipartition $V=X\sqcupdot Y$ for which $G_i=K_{X,Y}$ for every
$i\in[n]$.
\end{theorem}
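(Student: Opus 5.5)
The statement is quoted as a special case of the aggregate-degree theorem of Chen and Zhang~\cite{ChenZhang}, so the plan is to reduce it to that theorem and check that the hypotheses and the exceptional configuration match. Their result asserts: if $\GraphSys$ has a rainbow Hamilton cycle and $\sum_{v}\delta(v)\ge\lceil n^2/2\rceil-1$, then $\GraphSys$ is rainbow pancyclic unless $n$ is even and $G_1=\cdots=G_n=K_{X,Y}$ for a common balanced bipartition $X\sqcupdot Y$. The first step is integrality. Each $\delta(v)$ is an integer, so $\sum_v\delta(v)\ge n^2/2$ gives $\sum_v\delta(v)\ge\lceil n^2/2\rceil\ge\lceil n^2/2\rceil-1$. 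Hence the hypothesis of the statement above implies the Chen--Zhang hypothesis. The rainbow Hamilton cycle is assumed in both, so the conclusion transfers verbatim.

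The second step is to check the exceptional case. At the weaker threshold $\lceil n^2/2\rceil-1$, Chen and Zhang's list of exceptions might contain configurations beyond the balanced complete bipartite one, for instance near-extremal systems with degree sum exactly $\lceil n^2/2\rceil-1$. If so, I would show that any such extra configuration has $\sum_v\delta(v)<n^2/2$, so it is excluded here. Only the exception with $\sum_v\delta(v)=n^2/2$ survives, namely $n$ even and every $G_i=K_{X,Y}$ with a common balanced bipartition.

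Should one want a self-contained argument rather than a citation, I would follow Bondy's classical route adapted to colors. Fix a rainbow Hamilton cycle $C=v_1v_2\cdots v_nv_1$, with edge $v_jv_{j+1}$ colored $c_j$. Suppose a rainbow $C_\ell$ is missing for some $3\le\ell\le n-1$. For each $v_j$, count the indices $k$ for which both of the following hold:
\begin{itemize}
\item $v_jv_{k}$ lies in the colour class freed by deleting an arc of $C$;
\item $v_{j+1}v_{k+\ell-2}$ (or a similarly shifted partner) lies in another freed colour class.
\end{itemize}
Such a pair of chords closes a rainbow $C_\ell$, using the colours of the arc together with two unused colours. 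Using $\delta(v)$ as a lower bound for the degree in every colour, the absence of any such pair forces
\[
\delta(v_j)+\delta(v_{j+1})\le n
\]
on average along $C$. Summing over $j$ gives $\sum_v\delta(v)\le n^2/2$, and equality forces every inequality to be tight. Tightness yields complementary, disjoint neighbourhoods along the cycle. As in the ordinary Bondy argument, these force all neighbourhoods to alternate along $C$ and every colour class to coincide with the same $K_{X,Y}$.

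The main obstacle in the direct route is the colour bookkeeping. In the ordinary proof one freely uses any chord, but here the two closing chords must receive distinct colours outside $\col$ of the retained arc. The counting must therefore be done with a suitable choice of the two freed colours for each length $\ell$, for example the colours of the two deleted boundary edges of the arc. This is exactly where the cross-layer quantity $\delta(v)=\min_i d_{G_i}(v)$, rather than the layerwise degree, is needed. Since the paper only uses the statement as an input, I would present the first two paragraphs, the citation together with the integrality and exception check, as the proof.
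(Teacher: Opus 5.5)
Your proposal is correct and follows the paper: the paper gives no independent argument, but quotes Chen and Zhang's theorem at the weaker threshold $\sum_v\delta(v)\ge\lceil n^2/2\rceil-1$, with the same balanced complete bipartite exception, and notes that $\sum_v\delta(v)\ge n^2/2$ implies it (indeed $n^2/2>\lceil n^2/2\rceil-1$ holds outright, so your integrality step is not even needed). Your contingency about additional exceptions and your Bondy-style sketch are unnecessary; the sketch leaves the colour bookkeeping for the two closing chords unresolved, so you are right not to rely on it.
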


We also use the following consequence of the global Ore theorem of Li, Wang and
Yan~\cite{LiWangYan}. Their theorem is stated in terms of the cross-layer Ore parameter
$\sigma(\GraphSys)$: the condition $\sigma(\GraphSys)\ge n$ requires
$d_{G_p}(u)+d_{G_q}(v)\ge n$ whenever $uv$ fails to be common, uniformly over the colors
$p,q$. Condition~\eqref{eq:global-ore} implies this hypothesis because
$d_{G_p}(u)\ge\delta(u)$ for every $p$.

\begin{theorem}[Consequence of the global Ore theorem~\cite{LiWangYan}]\label{thm:global-vertex}
If a graph system $\GraphSys=(G_1,\ldots,G_n)$ satisfies~\eqref{eq:global-ore}, then either
$\GraphSys$ is $[4,n]$-rainbow vertex-pancyclic or $n$ is even and
$G_1=\cdots=G_n=K_{n/2,n/2}$ with a common balanced bipartition.
\end{theorem}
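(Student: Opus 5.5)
The plan is to obtain the statement by translating hypotheses and conclusions, with no new combinatorics. First I will recall the cross-layer parameter of Li, Wang and Yan:
\[
 \sigma(\GraphSys)=\min\bigl\{d_{G_p}(u)+d_{G_q}(v): uv\notin\Ec,\ p,q\in[n]\bigr\},
\]
with the convention that the minimum over the empty set is $+\infty$, which covers the case where every pair is common. Assume~\eqref{eq:global-ore} and take any non-common pair $uv$ and any colors $p,q$. Since $d_{G_p}(u)\ge\delta(u)$ and $d_{G_q}(v)\ge\delta(v)$, we get
\[
 d_{G_p}(u)+d_{G_q}(v)\ge\delta(u)+\delta(v)\ge n .
\]
Hence $\sigma(\GraphSys)\ge n$. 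Conversely, $\sigma(\GraphSys)\ge n$ also implies~\eqref{eq:global-ore}: choose $p$ and $q$ to attain $\delta(u)$ and $\delta(v)$. So the two hypotheses are in fact equivalent, which is the formulation already used in the introduction.

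Next I will apply the vertex-pancyclicity theorem of~\cite{LiWangYan} under $\sigma(\GraphSys)\ge n$. It gives that every vertex lies on a rainbow $C_\ell$ for each $\ell\in[4,n]$, unless $\GraphSys$ is the balanced complete bipartite exception. Three bookkeeping points need checking against their exact wording.
\begin{enumerate}
\item Any lower bound on $n$ in their theorem, or any implicit connectivity assumption, must be harmless. For tiny $n$, condition~\eqref{eq:global-ore} forces minimum degree at least $n/2$ in every member on non-common pairs, and one can check by hand that the small cases are Hamiltonian and satisfy the conclusion, or are $C_4=K_{2,2}$.
\item Their exceptional configuration must coincide with ours, namely $G_1=\cdots=G_n=K_{X,Y}$ with one common balanced bipartition $X\sqcupdot Y$. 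If they phrase it as ``every $G_i$ is $K_{n/2,n/2}$'', I will add a short argument that the bipartitions agree.
\item Their rainbow notion, an injective color assignment with each color used at most once, must match the definition of rainbow used here.
\end{enumerate}

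For point (ii), suppose $G_i=K_{X_i,Y_i}$ and $G_j=K_{X_j,Y_j}$ with distinct bipartitions. Then some edge $uv$ of $G_i$ lies inside a part of $G_j$. This edge is not common, while $\delta(u)+\delta(v)=n/2+n/2=n$, so~\eqref{eq:global-ore} does not rule it out directly. In that case I will instead show that $\GraphSys$ is $[4,n]$-rainbow vertex-pancyclic, or cite that their theorem excludes only the identical-bipartition case.

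The main obstacle I anticipate is point (ii): making sure the exceptional family in the cited theorem is exactly the common-bipartition one. If it is not, I will handle mixed bipartitions directly. With all $G_i\cong K_{n/2,n/2}$ but not all equal, I will build rainbow cycles through any vertex by choosing, for each edge of a target cycle, a distinct color containing it. For instance, I will use a Hall-type assignment, exploiting that each edge lies in many members because every $G_i$ has $n^2/4$ edges. Beyond this, the proof is a direct citation.
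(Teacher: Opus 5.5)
Your proposal is correct and matches the paper's argument: the paper justifies this statement only by noting that $d_{G_p}(u)\ge\delta(u)$ for every $p$, so~\eqref{eq:global-ore} implies $\sigma(\GraphSys)\ge n$, and then citing Li, Wang and Yan. Your bookkeeping point (ii) is unnecessary, because the paper takes the cited exception to be the identical-member case $G_1=\cdots=G_n=K_{n/2,n/2}$. The Hall-type fallback you sketch would not work as a proof in any case: each $G_i$ having $n^2/4$ edges says nothing about how many members contain a given edge.
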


The next lemma is the endpoint-crossing estimate used throughout the paper.

\begin{lemma}[Endpoint crossing]\label{lem:path}
Let $P=z_1\cdots z_k$ be a rainbow path, where $3\le k\le n-1$,
and suppose that $\GraphSys$ has no rainbow $C_k$.
For distinct colors $a,b\notin\col(P)$,
\begin{equation}\label{eq:path-in}
 d_a(z_1,V(P))+d_b(z_k,V(P))\le k-1.
\end{equation}
If $\delta(z_1)+\delta(z_k)\ge n$, then
\begin{equation}\label{eq:path-out}
 d_a(z_1,V\setminus V(P))+d_b(z_k,V\setminus V(P))
 \ge n-k+1,
\end{equation}
and there is a vertex $w\notin V(P)$ with
$z_1w\in E(G_a)$ and $wz_k\in E(G_b)$.
Furthermore, equality in \eqref{eq:path-in} implies that, for every
$j\in[k-1]$, exactly one of
\[
 z_1z_{j+1}\in E(G_a),\qquad z_jz_k\in E(G_b)
\]
holds.
\end{lemma}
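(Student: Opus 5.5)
The plan is the classical Bondy--Chvátal crossing argument, run with the two spare colors $a$ and $b$ playing the role of the closing edges. Let $P=z_1\cdots z_k$ with its fixed colors, so $|\col(P)|=k-1$. Set
\[
 A=\{j\in[k-1]: z_1z_{j+1}\in E(G_a)\},\qquad
 B=\{j\in[k-1]: z_jz_k\in E(G_b)\}.
\]
Then $|A|=d_a(z_1,V(P))$, because the neighbors of $z_1$ on $P$ are among $z_2,\dots,z_k$. Likewise $|B|=d_b(z_k,V(P))$, because the neighbors of $z_k$ on $P$ are among $z_1,\dots,z_{k-1}$.

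\emph{Main step: $A\cap B=\emptyset$.} Suppose $j\in A\cap B$. If $j=k-1$, then $z_1z_k\in E(G_a)$, and closing $P$ with color $a$ gives a rainbow $C_k$. If $j=1$, then $z_1z_k\in E(G_b)$, and closing with color $b$ does the same. For $2\le j\le k-2$, take
\[
 z_1z_2\cdots z_j\,z_k z_{k-1}\cdots z_{j+1}\,z_1 .
\]
Here we delete the path edge $z_jz_{j+1}$ and add $z_jz_k$ in color $b$ and $z_{j+1}z_1$ in color $a$. Every other edge keeps its color from $P$. Since $a\ne b$ and neither lies in $\col(P)$, the result is a rainbow cycle on all $k$ vertices, a contradiction. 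Hence $A$ and $B$ are disjoint subsets of $[k-1]$, and \eqref{eq:path-in} follows. The only care needed here is the boundary indices $j=1$ and $j=k-1$ and checking that distinctness of the colors is preserved. I expect no real obstacle.

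\emph{Equality case.} If equality holds in \eqref{eq:path-in}, then $|A|+|B|=k-1$. With $A\cap B=\emptyset$, this forces $A\sqcupdot B=[k-1]$. So for each $j\in[k-1]$ exactly one of the two membership conditions holds, which is the final assertion.

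\emph{Outside bound and the vertex $w$.} Assume $\delta(z_1)+\delta(z_k)\ge n$. Then $d_a(z_1)+d_b(z_k)\ge n$, since $d_a(z_1)\ge\delta(z_1)$ and $d_b(z_k)\ge\delta(z_k)$. Subtracting \eqref{eq:path-in} gives \eqref{eq:path-out}. The set $V\setminus V(P)$ has $n-k$ elements, while the two outside neighborhoods have total size at least $n-k+1$. By pigeonhole they share a vertex $w$, which satisfies $z_1w\in E(G_a)$ and $wz_k\in E(G_b)$. This $w$ exists because $k\le n-1$.
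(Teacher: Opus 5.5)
Your proposal is correct and follows essentially the same route as the paper: the paper defines the same two index sets, shows they are disjoint by the same crossing cycle on $V(P)$, reads off the equality case from $A\sqcupdot B=[k-1]$, and gets the outside bound and the common neighbor $w$ by subtracting \eqref{eq:path-in} and then counting. The only small difference is that the paper first excludes $z_1z_k\in E(G_a)\cup E(G_b)$ and then treats all $j$ uniformly, whereas you handle the boundary indices $j=1$ and $j=k-1$ separately; both are fine.
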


\begin{proof}
The edge $z_1z_k$ belongs to neither $G_a$ nor $G_b$; otherwise assigning it color $a$ or $b$
would yield a rainbow $C_k$ on $V(P)$. Define
\[
 I=\{j\in[k-1]:z_1z_{j+1}\in E(G_a)\},\quad
 J=\{j\in[k-1]:z_jz_k\in E(G_b)\}.
\]
If $j\in I\cap J$, deleting $z_jz_{j+1}$ and adding the two
edges in colors $a,b$ gives a rainbow cycle on $V(P)$.
Thus $I\cap J=\varnothing$, which proves \eqref{eq:path-in}.
If equality holds, $I\sqcupdot J=[k-1]$, giving the equality statement.
Subtracting \eqref{eq:path-in} from
$d_a(z_1)+d_b(z_k)\ge\delta(z_1)+\delta(z_k)\ge n$
proves \eqref{eq:path-out}. The two outside neighborhoods have total
size larger than $|V\setminus V(P)|=n-k$, so they intersect.
\end{proof}

\begin{lemma}[A missing shortcut gives a degree sum]\label{lem:shortcut}
Suppose that $\GraphSys$ satisfies \eqref{eq:LO}, that
$H=u_1\cdots u_Lu_1$ is a rainbow cycle with $L\ge4$, and that
there is no rainbow $C_{L-1}$. Then
\[
 \delta(u_i)+\delta(u_{i+2})\ge n
 \qquad(i\in\mathbb Z_L).
\]
\end{lemma}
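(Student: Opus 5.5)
Fix $i\in\mathbb Z_L$ and write $u=u_i$, $v=u_{i+2}$, with middle vertex $w=u_{i+1}$. The plan is to show that $uv\notin\Ec$ and that $d^c(u,v)\le2$. Then \eqref{eq:LO} forces $\delta(u)+\delta(v)\ge n$, because if $\delta(u)+\delta(v)<n$ it would give $uv\in\Ec$.

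First, $d^c(u,v)\le 2$ is immediate. The edges $uw$ and $wv$ are consecutive edges of the rainbow cycle $H$, so they carry distinct colors. Hence $u w v$ is a rainbow $u$--$v$ path of length $2$. Moreover $u\ne v$ because $L\ge4$ (indeed $L\ge3$ suffices).

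Second, suppose for contradiction that $uv\in\Ec$, so $uv\in E(G_c)$ for every color $c$. Let $a$ be the color of $uw$ on $H$. Replace the path $u w v$ by the single edge $uv$ and give it color $a$. The result is the cycle $H'=u_1\cdots u_i u_{i+2}\cdots u_L u_1$ on $L-1$ vertices. Since $L\ge4$, we have $L-1\ge3$, so $H'$ is a genuine cycle. Its colors are those of $H$ with the colors of $uw,wv$ removed and $a$ added back. They are therefore pairwise distinct, and every edge lies in the member of its assigned color, as $uv\in E(G_a)$. So $H'$ is a rainbow $C_{L-1}$, contradicting the hypothesis. Thus $uv\notin\Ec$.

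Combining the two steps, \eqref{eq:LO} gives $\delta(u_i)+\delta(u_{i+2})\ge n$. There is no real obstacle here. The only points to check are the choice of $L\ge4$, which ensures that $u\ne v$ and that the shortened cycle is a genuine cycle, and the recoloring of the shortcut with a color freed from the deleted edges.
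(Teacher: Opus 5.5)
Your proposal is correct and follows essentially the same route as the paper. Both arguments use the rainbow two-edge path $u_iu_{i+1}u_{i+2}$ to get $d^c\le2$, invoke \eqref{eq:LO} to make the chord $u_iu_{i+2}$ common, and shortcut in a released color to obtain a forbidden rainbow $C_{L-1}$. The only difference is that you phrase the contradiction in contrapositive order.
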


\begin{proof}
The two consecutive edges $u_iu_{i+1},u_{i+1}u_{i+2}$ give a
rainbow path of length two. If $\delta(u_i)+\delta(u_{i+2})<n$, then~\eqref{eq:LO} implies
$u_iu_{i+2}\in E_{\cap}$. Replacing the two-edge path
$u_iu_{i+1}u_{i+2}$ by $u_iu_{i+2}$ and assigning the chord one of the two released colors
yields a rainbow $C_{L-1}$.
\end{proof}

\subsection{Extensions with prescribed colors}\label{sec:detours}

Throughout this subsection, assume \eqref{eq:LO}, suppose every member is connected,
and fix a rainbow cycle
\[
 D=x_1x_2\cdots x_mx_1,\qquad 3\le m\le n-3,
\]
while no rainbow $C_{m+1}$ exists. Let
$F=[n]\setminus\col(D)$, so $|F|=n-m\ge3$.
Indices on $D$ are read modulo $m$.

\begin{lemma}[Transfer lemma]\label{lem:transfer}
Let $xy$ be an edge of $D$ with assigned color $e$.
Suppose that $p\notin V(D)$ and $xp\in E(G_a)$ for some $a\in F$.
For each $b\in F\setminus\{a\}$ there is
$q\notin V(D)\cup\{p\}$ such that
\[
 pq\in E(G_e),\qquad qy\in E(G_b).
\]
Consequently, replacing $xy$ by $xpqy$, with colors $a,e,b$ in
that order, gives a rainbow $C_{m+2}$.
\end{lemma}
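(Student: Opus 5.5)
The plan is to build a rainbow path on $m+1$ vertices that ends at $p$ and $y$ and has both $e$ and $b$ unused, and then apply Lemma~\ref{lem:path} to it with these two colors. The vertex $w$ that Lemma~\ref{lem:path} produces will be the desired $q$.

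\emph{The path.} Delete the edge $xy$ from $D$. This leaves a rainbow $x$--$y$ path through all of $V(D)$, using the colors $\col(D)\setminus\{e\}$. Prepend the edge $px$ with color $a$. Since $a\in F$ and $p\notin V(D)$, this gives a rainbow path
\[
 P=p\,x\cdots y,\qquad |V(P)|=m+1,\qquad \col(P)=(\col(D)\setminus\{e\})\cup\{a\}.
\]
Set $k=m+1$. Then $4\le k\le n-2\le n-1$, because $3\le m\le n-3$. The colors $e$ and $b$ are distinct and both lie outside $\col(P)$: $e$ was released from $D$, and $b\in F\setminus\{a\}$. By hypothesis there is no rainbow $C_{m+1}=C_k$, so Lemma~\ref{lem:path} applies to $P$ with $z_1=p$, $z_k=y$, and colors $e$ at $p$ and $b$ at $y$.

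\emph{The degree condition.} This is the only step needing an argument: we must show $\delta(p)+\delta(y)\ge n$, and we get it from \eqref{eq:LO}. The path $p\,x\,y$, with colors $a$ on $px$ and $e$ on $xy$, is a rainbow path of length two, so $d^c(p,y)\le2$. Suppose $\delta(p)+\delta(y)<n$. Then \eqref{eq:LO} gives $py\in\Ec$, and in particular $py\in E(G_b)$. Replace the edge $xy$ of $D$ by the path $x\,p\,y$ with colors $a,b$. Both colors are in $F$ and distinct, so this is a rainbow $C_{m+1}$, a contradiction. Hence $\delta(p)+\delta(y)\ge n$.

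\emph{Conclusion.} The second part of Lemma~\ref{lem:path} now gives a vertex $w\notin V(P)=V(D)\cup\{p\}$ with $pw\in E(G_e)$ and $wy\in E(G_b)$. Take $q=w$. Then replace $xy$ in $D$ by $x\,p\,q\,y$ with colors $a,e,b$. These three colors are pairwise distinct: $a\ne b$ are both in $F$, and $e\in\col(D)$ while $a,b\notin\col(D)$. The colors $a,b$ were unused on $D$, and $e$ is freed by removing $xy$. The vertices $p$ and $q$ are new and distinct. So the result is a rainbow $C_{m+2}$.
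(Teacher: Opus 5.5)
Your proof is correct and follows the paper's argument essentially step for step. You build the same path $P=p\,x\cdots y$ with colors $e,b$ unused, and you use the rainbow path $pxy$ with \eqref{eq:LO} to obtain $\delta(p)+\delta(y)\ge n$; your cycle $D-xy+xp+py$ is exactly the paper's closure of $P$ by $py$ in color $b$. You then apply Lemma~\ref{lem:path} with color $e$ at $p$ and color $b$ at $y$, exactly as the paper does.
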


\begin{proof}
Delete $xy$ from $D$, traverse the resulting $x$--$y$ path, and prepend
$px$ in color $a$.  The resulting rainbow path $P$ has vertex set
$V(D)\cup\{p\}$, hence $m+1$ vertices and endpoints $p,y$, and
\[
 \col(P)=(\col(D)\setminus\{e\})\cup\{a\}.
\]
Thus both $e$ and $b$ are unused on $P$.  The two-edge path $pxy$,
using $a$ on $px$ and the original color $e$ on $xy$, is rainbow, so
$d^c(p,y)\le2$.  If $\delta(p)+\delta(y)<n$, then~\eqref{eq:LO}
forces $py\in E_{\cap}$; assigning $py$ color $b$ closes $P$ to a
rainbow $C_{m+1}$, contrary to the standing assumption.  Hence
$\delta(p)+\delta(y)\ge n$.

We may therefore apply Lemma~\ref{lem:path} to the $(m+1)$-vertex path
$P$ with endpoint colors $e,b$; the hypothesis ``no rainbow $C_{m+1}$''
is exactly the standing hypothesis.  The lemma gives a common outside
neighbor $q\notin V(P)=V(D)\cup\{p\}$ such that
$pq\in E(G_e)$ and $qy\in E(G_b)$.  Hence $p,q\notin V(D)$ are
distinct.  Since $a,b\in F$, $a\ne b$, and $e\in\col(D)$, the colors
$a,e,b$ are pairwise distinct, and replacing $xy$ by $xpqy$ gives the
claimed rainbow $C_{m+2}$.
\end{proof}

\begin{lemma}[Prescribed two-vertex extension]\label{lem:detour}
For every edge $xy$ of $D$, with assigned color $e$, and every
ordered pair of distinct colors $a,b\in F$, there are distinct
$p,q\notin V(D)$ such that
\begin{equation}\label{eq:detour}
 xp\in E(G_a),\qquad pq\in E(G_e),\qquad qy\in E(G_b).
\end{equation}
\end{lemma}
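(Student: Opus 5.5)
The plan is to reduce Lemma~\ref{lem:detour} to the Transfer lemma (Lemma~\ref{lem:transfer}). Its conclusion already provides, for any outside vertex $p$ with $xp\in E(G_a)$ and $a\in F$, the vertex $q$ with $pq\in E(G_e)$ and $qy\in E(G_b)$ for every $b\in F\setminus\{a\}$. So the only missing ingredient is the existence of \emph{some} $p\notin V(D)$ with $xp\in E(G_a)$, for the prescribed color $a\in F$. Once such a $p$ is found, Lemma~\ref{lem:transfer} applied to the edge $xy$ and the pair $(a,b)$ gives $q\notin V(D)\cup\{p\}$, and \eqref{eq:detour} holds with $p\ne q$ automatically.

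To produce $p$, I would argue by contradiction: suppose $N_a(x)\subseteq V(D)$. First consider any $D$-neighbor $x_j\ne x$ of $x$ in $G_a$ that is not adjacent to $x$ on $D$, and let $x^+$ be a successor of $x_j$ on $D$ (in the orientation where $y$ follows $x$). Then I would try to build an outside neighbor by a connectivity/rotation argument. Since $G_a$ is connected and $m\le n-3$, some vertex of $D$ has a $G_a$-neighbor outside $D$; call it $x_i$, with outside neighbor $p$. Applying Lemma~\ref{lem:transfer} at $x_i$ and the edge $x_ix_{i+1}$ (or $x_ix_{i-1}$, reversing orientation) produces, for each $b\in F\setminus\{a\}$, outside vertices $p,q$ giving a rainbow $C_{m+2}$. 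This alone does not give a $C_{m+1}$, so the strategy has to be a degree argument at $x$ rather than a direct rotation.

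The cleaner route is via degrees. Use the rainbow path $P$ obtained from $D$ by deleting the edge $x_ix_{i+1}$ and prepending $px_i$ in color $a$, as in the proof of Lemma~\ref{lem:transfer}; that proof shows $\delta(p)+\delta(x_{i+1})\ge n$. Alternatively, work directly at $x$: take the rainbow path $D-xy$ from $y$ to $x$, with $m$ vertices. Since $m\le n-3$, Lemma~\ref{lem:path} cannot be applied with $k=m$ (no rainbow $C_m$ is false). Instead, compare $x$ with outside vertices. Because no rainbow $C_{m+1}$ exists, the sets $\{x_j: x_{j+1}\in N_a(w)\}$-type arguments show that a vertex $w\notin V(D)$ cannot have $G_c$-neighbors at two consecutive vertices of $D$ for distinct $c,c'\in F$. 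So each outside $w$ has few $D$-neighbors in $G_c$, roughly $\le m/2$. Using \eqref{eq:LO} on pairs at rainbow distance~$2$ then forces large $\delta$ at $x$ or its neighbors. Concretely, if $N_a(x)\subseteq V(D)$, I pick an outside vertex $w$ adjacent (in some $G_c$, $c\in F$, using connectivity of $G_c$ and relabeling so the attachment point is a $D$-neighbor of $x$) to $x^-$ or $x^+$. Then $d^c(w,x)\le2$, and $wx\notin E_\cap$ since $wx\notin E(G_a)$. This gives $\delta(w)+\delta(x)\ge n$. With $\delta(x)\le d_a(x)\le m-1$ and the outside-vertex bound on $\delta(w)$, this should yield a contradiction.

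The main obstacle is this last counting step: bounding $\delta(w)$ for outside $w$ sharply enough that $\delta(w)+\delta(x)<n$. I would first try to prove that $w$ has at most $\lfloor m/2\rfloor$ neighbors on $D$ in a fixed free color, and then use $d_a(x)\le m-1$ together with the outside neighbors of $w$, of which there are at most $n-m-1$. If this totals at most $n-1$, the contradiction follows. If the crude bound fails, I would instead exploit the symmetry between $x$ and $y$ and the freedom of the color $b$ in Lemma~\ref{lem:transfer} to sharpen it.
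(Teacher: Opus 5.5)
Your reduction is correct. Once some $p\notin V(D)$ with $xp\in E(G_a)$ is known for the prescribed pair $(x,a)$, Lemma~\ref{lem:transfer} supplies $q$, and $p\ne q$ comes for free. But the existence of such a $p$ is the whole content of the lemma, and your proposal does not establish it.

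The degree route you sketch does not close. Under the assumption $N_a(x)\subseteq V(D)$ you only get $\delta(x)\le m-1$. Even granting your bound of $\lfloor m/2\rfloor$ on the $D$-neighbors of an outside $w$, you get $\delta(w)\le\lfloor m/2\rfloor+(n-m-1)$. Your own total is then $n-2+\lfloor m/2\rfloor$, which exceeds $n-1$ once $m\ge4$. So there is no contradiction with the bound $\delta(w)+\delta(x)\ge n$ from~\eqref{eq:LO}.

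There are two further problems:
\begin{itemize}
\item The step ``relabeling so the attachment point is a $D$-neighbor of $x$'' is unjustified. Connectivity of $G_c$ only gives an edge leaving $V(D)$ at some uncontrolled vertex of $D$. Moving that attachment next to $x$ is essentially what the lemma asserts.
\item The no-consecutive-neighbors property you cite holds only for two \emph{distinct} free colors. It does not bound neighbors in a single fixed color.
\end{itemize}

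The missing idea is one you passed over when you dismissed applying Lemma~\ref{lem:transfer} at $x_i$ because it ``does not give a $C_{m+1}$''. You do not need a contradiction there: the vertex $q$ it produces is itself an outside $G_b$-neighbor of $x_{i+1}$. Call $(v,f)\in V(D)\times F$ available when $v$ has a $G_f$-neighbor outside $D$. The transfer lemma, used in either orientation of a cycle edge, says: if $(v,f)$ is available, $vv'\in E(D)$ and $g\in F\setminus\{f\}$, then $(v',g)$ is available.

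The proof then runs in three steps:
\begin{itemize}
\item \emph{Switch colors at a vertex.} Since $|F|=n-m\ge3$, go from $(v,f)$ to $(v',c)$ with $c\in F\setminus\{f,g\}$, then come back along the same edge to $(v,g)$.
\item \emph{Seed.} Connectivity of a single $G_{a_0}$ with $a_0\in F$ gives one available pair to start from.
\item \emph{Propagate.} Move around $D$ to make every pair $(v,f)$ available, in particular the prescribed $(x,a)$. Your reduction then finishes the proof.
\end{itemize}
This is exactly the paper's argument.
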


\begin{proof}
Call $(x,a)\in V(D)\times F$ \emph{available} if $x$ has a
$G_a$-neighbor outside $D$.  Since $m\le n-3$, the complement of
$V(D)$ is nonempty.  Fix any $a_0\in F$.  Connectivity of $G_{a_0}$
then gives an edge of $G_{a_0}$ between $V(D)$ and its complement, so
at least one pair $(x,a_0)$ is available.

Lemma~\ref{lem:transfer}, applied in either orientation of a cycle edge,
gives the propagation rule
\begin{equation}\label{eq:availability}
 (x,a)\text{ available},\quad xy\in E(D),\quad b\in F\setminus\{a\}
 \quad\Longrightarrow\quad (y,b)\text{ available}.
\end{equation}
We first show that availability at one cycle vertex is independent of
the unused color.  Suppose $(x,a)$ is available and fix
$b\in F\setminus\{a\}$.  Since $|F|\ge3$, choose
$c\in F\setminus\{a,b\}$ and a neighbor $y$ of $x$ on $D$.
Applying~\eqref{eq:availability} to $(x,a)$ with target color $c$
gives $(y,c)$; applying it again along the same edge in the reverse
direction, now with target color $b$, gives $(x,b)$.  Hence $(x,f)$ is
available for every $f\in F$.

Now let $y$ be a neighbor of such an $x$ and let $b\in F$ be arbitrary.
Choose $a\in F\setminus\{b\}$; since $(x,a)$ is available,
\eqref{eq:availability} yields $(y,b)$.  Iterating around the connected
cycle $D$ shows that $(v,f)$ is available for every $v\in V(D)$ and
every $f\in F$.  For the prescribed edge $xy$ and ordered pair
$a,b$, choose a $G_a$-neighbor $p\notin V(D)$ of $x$ and apply
Lemma~\ref{lem:transfer} with the second unused color $b$.
\end{proof}

\begin{corollary}\label{cor:two-step}
For every $i\in\mathbb Z_m$,
\begin{equation}\label{eq:two-step}
 \delta(x_i)+\delta(x_{i+2})\ge n.
\end{equation}
\end{corollary}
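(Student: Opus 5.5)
We want $\delta(x_i)+\delta(x_{i+2})\ge n$ under the standing assumptions: \eqref{eq:LO}, connected members, a rainbow $C_m$ called $D$ with $3\le m\le n-3$, and no rainbow $C_{m+1}$. The plan is to build, from Lemma~\ref{lem:detour}, a rainbow cycle of length $m+2$ that passes through $x_i,x_{i+1},x_{i+2}$ consecutively. Then the missing-shortcut argument of Lemma~\ref{lem:shortcut} applies directly: if $\delta(x_i)+\delta(x_{i+2})<n$, condition \eqref{eq:LO} makes the chord $x_ix_{i+2}$ common, and using it would produce a rainbow $C_{m+1}$.

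First fix the structure. Let $e$ be the assigned color of some edge of $D$ other than $x_ix_{i+1}$ and $x_{i+1}x_{i+2}$. Such an edge exists because $m\ge3$; for instance take $xy=x_{i+2}x_{i+3}$ when $m\ge4$, and $xy=x_{i+2}x_i$ when $m=3$. Choose distinct $a,b\in F$, which is possible since $|F|\ge3$. Lemma~\ref{lem:detour} then gives distinct $p,q\notin V(D)$ with $xp\in E(G_a)$, $pq\in E(G_e)$ and $qy\in E(G_b)$.

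Replace $xy$ by the path $xpqy$. The result is a rainbow cycle $H$ of length $m+2$, since the colors $a,e,b$ are pairwise distinct and disjoint from $\col(D)\setminus\{e\}$. The path $x_ix_{i+1}x_{i+2}$ is untouched by this replacement, so it still appears in $H$ with its original colors. Hence it is a rainbow two-edge path, and $d^c(x_i,x_{i+2})\le2$.

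Now suppose $\delta(x_i)+\delta(x_{i+2})<n$. Then \eqref{eq:LO} gives $x_ix_{i+2}\in\Ec$. Either of the following yields a rainbow $C_{m+1}$, which contradicts the standing hypothesis:
\begin{enumerate}
\item Apply Lemma~\ref{lem:shortcut}'s replacement inside $H$: replace $x_ix_{i+1}x_{i+2}$ by the chord $x_ix_{i+2}$, with one released color. This gives a rainbow cycle of length $(m+2)-1=m+1$.
\item Alternatively, the shortened cycle $D'$ of length $m-1$ is itself a rainbow cycle, and inserting the detour $xpqy$ into it also gives length $m+1$.
\end{enumerate}
Both are valid; the first is the cleanest. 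So $\delta(x_i)+\delta(x_{i+2})\ge n$.

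The only subtle point is to make sure the edge $xy$ used for the detour is disjoint, as an edge, from the two edges at $x_{i+1}$. That is what guarantees the two-edge path survives in $H$. It holds for every $m\ge3$ by the choice above; when $m=3$ the chosen edge $x_{i+2}x_i$ shares endpoints with the path but is a different edge, which is all that is needed.
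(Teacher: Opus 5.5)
Your proof is correct and is essentially the paper's argument: you extend an edge of $D$ other than the two edges at $x_{i+1}$ (the third edge when $m=3$) by Lemma~\ref{lem:detour}, then apply the shortcut of Lemma~\ref{lem:shortcut} to the resulting rainbow $C_{m+2}$. Your alternative (ii) is degenerate when $m=3$, because $D'$ would have length $2$, but route (i), which you actually use, needs no such caveat.
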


\begin{proof}
Choose an edge of $D$ other than the two edges of
$x_ix_{i+1}x_{i+2}$. Such an edge exists even when $m=3$: in that
case it is precisely the third edge $x_{i+2}x_i$. Extend the chosen
edge as in Lemma~\ref{lem:detour}. Since only the chosen edge is replaced, the resulting rainbow $C_{m+2}$
still contains $x_ix_{i+1}x_{i+2}$. As no rainbow $C_{m+1}$ exists,
Lemma~\ref{lem:shortcut} yields~\eqref{eq:two-step}.
\end{proof}

\subsection{Neighborhood separation}\label{sec:separation}

Retain the assumptions and notation of Subsection~\ref{sec:detours}.
The next lemma gives the required separation. All neighborhoods are taken in the whole vertex set
$V$.

\begin{lemma}[Neighborhood separation]\label{lem:separation}
For every edge $xy$ of $D$ and every two distinct colors $a,b\in F$,
\begin{equation}\label{eq:separation}
 N_a(x)\cap N_b(y)=\varnothing.
\end{equation}
\end{lemma}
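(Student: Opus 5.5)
The plan is to argue by contradiction: fix an edge $xy$ of $D$ with assigned color $e$, fix distinct $a,b\in F$, and suppose some $w\in N_a(x)\cap N_b(y)$ exists. Note that $w\neq x,y$, since $G_a$ and $G_b$ are simple. The argument splits according to whether $w$ lies on $D$.

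\emph{Case 1: $w\notin V(D)$.} This case is immediate. Delete $xy$ from $D$ and insert the path $xwy$, coloring $xw$ with $a$ and $wy$ with $b$. The released color $e$ is simply not used. Since $a,b\in F$ are distinct, the result is a rainbow $C_{m+1}$, contradicting the standing hypothesis.

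\emph{Case 2: $w\in V(D)$.} Label $D$ so that $x=x_1$, $y=x_2$ and $w=x_j$ with $3\le j\le m$. The two chords $x_1x_j$ (color $a$) and $x_2x_j$ (color $b$) let us reroute around $D$. The idea is to pay for the vertices lost in the rerouting by a prescribed-color extension from Lemma~\ref{lem:detour}, which adds exactly two outside vertices, so that the final length is exactly $m+1$.
\begin{itemize}
\item For $j=3$, apply Lemma~\ref{lem:detour} to the edge $x_3x_4$, with color $f$ say, using two unused colors $c,d\in F\setminus\{b\}$ (possible if $|F|\ge 3$). This gives outside vertices $p,q$, and the cycle
\[
 x_2 \xrightarrow{b} x_3 \xrightarrow{c} p \xrightarrow{f} q \xrightarrow{d} x_4\,x_5\cdots x_m\,x_1 \xrightarrow{e} x_2
\]
has length $m+1$.
\item For $j=m$, the situation is symmetric: use the chord $x_1x_m$ in color $a$ and extend the edge $x_{m-1}x_m$.
\end{itemize}

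For a general $j$, I would pass to an extremal configuration. Among all triples (cycle edge $xy$, distinct colors $a,b\in F$, common neighbour $w$ on $D$), choose one minimizing the distance along $D$ from $w$ to $\{x,y\}$. Rerouting through one chord turns $D$ into a rainbow cycle $D'$ of the same length $m$ with $\col(D')\setminus\col(D)\subseteq\{a,b\}$. Two facts should then be checked:
\begin{itemize}
\item In $D'$ the offending common neighbour sits strictly closer to its edge, so minimality forces that distance down to $1$, which is the $j=3$ case.
\item The colors unused on $D'$ still contain at least two colors outside $\{a,b\}$, so that Lemma~\ref{lem:detour} applies to $D'$. This uses $|F|\ge3$ together with the freed color $e$.
\end{itemize}

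The main obstacle is this reduction step. One must verify that the rerouted cycle is genuinely rainbow with the right unused set, and that it contains a configuration of the same type with smaller $j$. If the reduction fails, the fallback is Lemma~\ref{lem:path}. Apply it to the rainbow path $x_2x_3\cdots x_mx_1$ on $m$ vertices, obtained by deleting $xy$ and freeing $e$, extended by one outside vertex via Lemma~\ref{lem:transfer} so that it has $m+1$ vertices. Corollary~\ref{cor:two-step} supplies the degree sums needed to invoke Lemma~\ref{lem:path} there. The equality or alternation structure of the index sets $I,J$ should then be incompatible with both chords $x_1x_j\in E(G_a)$ and $x_2x_j\in E(G_b)$ being present.
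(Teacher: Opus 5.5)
Your Case~1 is exactly the paper's first step, but Case~2 has a genuine gap. The $j=3$ construction is miscounted. The displayed cycle passes through all of $x_1,\dots,x_m$ and through $p,q$, so it is just the $C_{m+2}$ of Lemma~\ref{lem:detour} with $x_2x_3$ recolored $b$. The $a$-chord $x_1x_3$ is never used, and nothing is contradicted. Likewise, for $j=m$ the edge $x_1x_m$ is a cycle edge, not a chord.

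The repair is to let a distance-two chord skip a vertex. For $m\ge4$, take $p,q$ from Lemma~\ref{lem:detour} on $x_3x_4$ with $c,d\in F\setminus\{a\}$; then $x_1\xrightarrow{a}x_3\xrightarrow{c}p\xrightarrow{f}q\xrightarrow{d}x_4\cdots x_mx_1$ is a rainbow $C_{m+1}$. The same device handles $j=4,m-1,m$, using the chords $x_2x_4$, $x_1x_{m-1}$, $x_2x_m$ and extending a $D$-edge not incident with the skipped vertex. For $m=3$, close the detour $x_1pqx_3$ of the cycle edge $x_1x_3$ (built with colors in $F\setminus\{a\}$) by that edge in color $a$, which gives a rainbow $C_4$. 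This settles all $m\le6$. For $m\ge7$ and $5\le j\le m-2$, however, neither $x_1x_j$ nor $x_2x_j$ skips a single vertex of $D$, and your proposal does not prove this case.

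The extremal reduction has no mechanism. Adding one chord $x_1x_j$ (or $x_2x_j$) to $D$ splits it into cycles of lengths $j$ and $m-j+2$ (or $j-1$ and $m-j+3$), not into a rainbow $m$-cycle $D'$. A length-preserving reroute would need an extra crossing chord such as $x_2x_{j+1}$, or a bypass of the old position of $x_j$, and nothing supplies one. The fallback names no endpoints, colors or contradiction. Also, Corollary~\ref{cor:two-step} says nothing about an outside endpoint.

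The missing idea is the paper's:
\begin{enumerate}
\item Extend $xy$ to $xpqy$ in colors $a,e,b$ by Lemma~\ref{lem:detour}, giving a rainbow $C_{m+2}$ $H$, and delete the common neighbor $z$. This leaves an $(m+1)$-vertex path $Q$ between the $D$-neighbors $s,t$ of $z$, on which the colors $r,h$ of $sz,zt$ are free. Lemma~\ref{lem:shortcut} gives $\delta(s)+\delta(t)\ge n$.
\item The cycle $s\,D\,x\,z\,y\,D\,t\,w\,s$ inserts $z$ into $xy$ and bypasses its old position through $w$. It shows that no $w\in W=V\setminus V(H)$ is a common $(r,h)$-neighbor of $s,t$. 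Hence Lemma~\ref{lem:path} is tight on $Q$, and $W=N_r(s,W)\sqcupdot N_h(t,W)$.
\item A third unused color $c$ then forces $sz,zt\in E(G_c)$. Two further endpoint-crossing arguments follow: one on a path from a vertex of $N_r(s,W)$ to $z$, and one on a path from $p$ or $q$ to $t$. Together they produce a rainbow $C_{m+1}$.
\end{enumerate}
Some argument of this kind is needed for the far positions of the common neighbor.
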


\begin{proof}
If some $z\notin V(D)$ belonged to $N_a(x)\cap N_b(y)$, then replacing
$xy$ by $xzy$ and assigning colors $a,b$ to the new edges would yield
a rainbow $C_{m+1}$.
Suppose, therefore, that
\begin{equation}\label{eq:internal-z}
 z\in V(D)\setminus\{x,y\},\qquad
 xz\in E(G_a),\qquad zy\in E(G_b).
\end{equation}
Let $s,t$ be the two neighbors of $z$ on $D$, and let $r,h$ be the
assigned colors of $sz,zt$, respectively. Choose the labels $s,t$ so
that the path $D-z$, traversed from $s$ to $t$, encounters $x$ and
then $y$. In particular, $r,h$ are distinct colors of $D$ and neither
is the assigned color $e$ of $xy$.

By Lemma~\ref{lem:detour}, replace $xy$ by $xpqy$ in colors
$a,e,b$, obtaining a rainbow cycle $H$ on $m+2$ vertices.
The vertices $p,q$ lie outside $D$. Write
\[
 Q=H-z,\qquad W=V\setminus V(H).
\]
Then
\[
 V(Q)=(V(D)\setminus\{z\})\cup\{p,q\},\qquad
 V\setminus V(Q)=W\cup\{z\}.
\]
Thus $Q$ is a rainbow $s$--$t$ path on $m+1$ vertices.  Its color set
is obtained from the color set of $H$ by deleting the two colors
$r,h$, so $r,h$ are unused on $Q$.  Moreover
$|W|=n-(m+2)=n-m-2\ge1$; this remains true in the boundary case
$m=n-3$, when $W$ has one vertex.  The edges $sz,zt$ remain consecutive
in $H$, and the standing hypothesis excludes a rainbow $C_{m+1}$.
Hence Lemma~\ref{lem:shortcut} applied to $H$ yields
\begin{equation}\label{eq:st-high}
 \delta(s)+\delta(t)\ge n.
\end{equation}

\medskip
\noindent\emph{Equality on the $s$--$t$ path.}
There is no $w\in W$ with $sw\in E(G_r)$ and $wt\in E(G_h)$.
Indeed, delete the three edges $sz,zt,xy$ from $D$. The vertex
$z$ is then isolated, while the path $D-z$ is cut at the edge $xy$
into two (possibly trivial) path segments, one from $s$ to $x$ and the
other from $y$ to $t$. Add $xz,zy,tw,ws$. Traversing
\[
 s\,D\,x-z-y\,D\,t-w-s
\]
therefore gives a single cycle on $V(D)\cup\{w\}$, of length
$m+1$; this description also covers the boundary case $m=3$, when one
or both of the two retained $D-z$ segments may be trivial. Assign
colors $a,b,h,r$ to $xz,zy,tw,ws$, respectively.
The deleted edges had the distinct colors $r,h,e$, while $a,b$ were
unused on $D$; hence all retained and new colors are pairwise distinct,
and the new cycle is rainbow.
Consequently, the two endpoint neighborhoods outside $Q$, whose
ambient set is $W\cup\{z\}$, intersect exactly in $z$: the vertex $z$
is adjacent to $s$ in color $r$ and to $t$ in color $h$, while the
preceding paragraph excludes every common neighbor in $W$.  Hence
\[
 \begin{aligned}
 d_r(s,W\cup\{z\})+d_h(t,W\cup\{z\})
 &=|N_r(s,W\cup\{z\})\cup N_h(t,W\cup\{z\})|+1\\
 &\le |W|+2.
 \end{aligned}
\]
On the other hand, Lemma~\ref{lem:path} applied to the $(m+1)$-vertex
path $Q$ gives an inside sum at most $m$.  Together with
\eqref{eq:st-high},
\[
 \begin{split}
 n&\le d_r(s)+d_h(t)\\
 &\le m+(|W|+2)=n.
 \end{split}
\]
Thus every inequality is an equality. In particular,
\begin{equation}\label{eq:Q-saturation}
 d_r(s,V(Q))+d_h(t,V(Q))=m.
\end{equation}
Equality outside $Q$ means that the union of the two outside endpoint
neighborhoods is all of $W\cup\{z\}$.  Since their only common vertex
is $z$, every vertex of $W$ belongs to exactly one of them. Therefore
\begin{equation}\label{eq:AB}
 A=N_r(s,W),\qquad B=N_h(t,W)
 \qquad\text{satisfy}\qquad A\sqcupdot B=W.
\end{equation}

\medskip
\noindent\emph{A third unused color.}
Choose $c\in F\setminus\{a,b\}$.  Since $H$ uses the colors of
$D$ together with $a,b$, and $Q$ is obtained by deleting the two
$r,h$-colored edges incident with $z$, the pairs $c,h$ and $r,c$ are
both pairs of unused colors on $Q$.  Apply Lemma~\ref{lem:path} to
$Q$ first with endpoint colors $c,h$ and then with $r,c$; in both
applications the degree lower bound is~\eqref{eq:st-high}.  The lemma
therefore supplies a common endpoint neighbor outside $Q$.  Such a
neighbor cannot lie in $W$, since the construction in the preceding
paragraph would then yield a rainbow $C_{m+1}$, using the four new
colors $c,h,a,b$ or $r,c,a,b$, respectively.  Hence the common
neighbor is $z$ in both applications, and
\begin{equation}\label{eq:third-color}
 sz,zt\in E(G_c).
\end{equation}
For every $w\in W$, we also have
\begin{equation}\label{eq:z-no-W}
 zw\notin E(G_b).
\end{equation}
If $w\in A$, the contrary would insert $w$ into the edge $sz$
of $D$ using colors $r,b$; if $w\in B$, it would insert $w$
into $zt$ using colors $b,h$. Either insertion gives $C_{m+1}$.

At most one of $zp,zq$ belongs to $G_b$. If both belonged, then
$sq\notin E(G_r)$ and $pt\notin E(G_h)$, since either edge together
with one of $zp,zq$ would give a one-vertex insertion into $D$.
Since $pq$ is a consecutive edge of $Q$, the equality statement in
Lemma~\ref{lem:path}, applied to~\eqref{eq:Q-saturation}, requires
exactly one of $sq\in E(G_r)$ and $pt\in E(G_h)$, a contradiction.
Thus
\begin{equation}\label{eq:z-pq}
 |N_b(z,\{p,q\})|\le1.
\end{equation}

\medskip
\noindent\emph{Saturation at an outside vertex.}
Since $W\ne\varnothing$ and $A\sqcupdot B=W$, at least one of
$A,B$ is nonempty. If $A=\varnothing$, reverse the endpoint notation
for the remainder of the argument, interchanging $(s,r,A)$ with $(t,h,B)$.
Equations~\eqref{eq:st-high}--\eqref{eq:z-pq} are invariant under this
reversal (with $A,B$ interchanged), while the colors $a,b,c$ and the
vertices $p,q,z$ are unchanged. Hence we may assume $A\ne\varnothing$
and choose $w\in A$.
Form the path $P$ by taking $ws$ in color $r$ and then traversing
the $s$--$z$ path in $D$ that avoids $sz$.  This longer $s$--$z$
arc contains every vertex of $D$ and every edge of $D$ except $sz$;
after adding $ws$ in the released color $r$, the path $P$ has vertex
set $V(D)\cup\{w\}$, has $m+1$ vertices, endpoints $w,z$, and uses
exactly the $m$ colors of $D$.  In particular $a,b$ are unused on
$P$.  The path $wsz$ in colors $r,c$ is rainbow by
\eqref{eq:third-color}. If $\delta(w)+\delta(z)<n$, then~\eqref{eq:LO} gives $wz\in E_{\cap}$, and assigning
$wz$ color $a$ yields a rainbow $C_{m+1}$, a contradiction.
Thus $\delta(w)+\delta(z)\ge n$.
Apply Lemma~\ref{lem:path} to $P$ with endpoint colors $a,b$.
The vertices outside $P$ form the set
\[
 U=(W\setminus\{w\})\cup\{p,q\},\qquad |U|=n-m-1.
\]
By \eqref{eq:z-no-W} and \eqref{eq:z-pq}, $d_b(z,U)\le1$,
whereas \eqref{eq:path-out} gives
\[
 d_a(w,U)+d_b(z,U)\ge n-m=|U|+1.
\]
Since always $d_a(w,U)\le|U|$, both inequalities must be tight:
$d_a(w,U)=|U|$ and $d_b(z,U)=1$.  Moreover
\eqref{eq:z-no-W} shows that the unique $G_b$-neighbor of $z$ in $U$
comes from $\{p,q\}$. Thus
\begin{equation}\label{eq:w-full}
 N_a(w,U)=U,\qquad N_b(z,U)=\{p_0\}
\end{equation}
for one $p_0\in\{p,q\}$. Write
$\{p_0,q_0\}=\{p,q\}$.

If $B\ne\varnothing$, choose $v\in B$. The edges
$sw,wv,vt$ have colors $r,a,h$, by \eqref{eq:AB} and
\eqref{eq:w-full}. Replacing $szt$ in $D$ by $swvt$ gives a
rainbow $C_{m+1}$, a contradiction. Hence $B=\varnothing$.

Now take the path $R$ that starts with $p_0z$ in color $b$ and
continues along the $z$--$t$ path of $D$ avoiding $zt$.  The long
$z$--$t$ arc uses every color of $D$ except $h$, so
\[
 \col(R)=(\col(D)\setminus\{h\})\cup\{b\}.
\]
Thus $R$ has $m+1$ vertices and the colors $h,a$ are unused on it.
The path $p_0zt$ in colors $b,h$ shows that
$d^c(p_0,t)\le2$. If $\delta(p_0)+\delta(t)<n$, then~\eqref{eq:LO} gives
$p_0t\in E_{\cap}$, and assigning this edge color $a$ closes $R$ to a rainbow
$C_{m+1}$. Hence $\delta(p_0)+\delta(t)\ge n$.
Lemma~\ref{lem:path}, with color $a$ at $p_0$ and color $h$ at
$t$, yields a common neighbor outside $R$. Since
\[
 V\setminus V(R)=W\cup\{q_0\},\qquad N_h(t,W)=B=\varnothing,
\]
that common neighbor must be $q_0$, and in particular
$q_0t\in E(G_h)$. Equation~\eqref{eq:w-full} also gives
$wq_0\in E(G_a)$. Replacing $szt$ in $D$ by $swq_0t$, with
colors $r,a,h$, again yields a rainbow $C_{m+1}$.
This contradiction excludes~\eqref{eq:internal-z}.
\end{proof}

\subsection{Missing-length rigidity}\label{sec:missing-length-rigidity}

\begin{theorem}[Missing-length rigidity]\label{thm:rigidity}
Let $\GraphSys=(G_1,\ldots,G_n)$ satisfy~\eqref{eq:LO}, and suppose every member is connected.
Let $3\le m\le n-3$.  If $\GraphSys$ contains a rainbow $C_m$ but no rainbow $C_{m+1}$, then
$n$ is even and
\[
 \delta(v)=\frac n2\qquad(v\in V).
\]
\end{theorem}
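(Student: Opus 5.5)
We fix a rainbow $C_m$, say $D=x_1\cdots x_mx_1$, so that the standing hypotheses of Subsections~\ref{sec:detours} and~\ref{sec:separation} hold with $F=[n]\setminus\col(D)$ and $|F|=n-m\ge3$. The plan is to combine two inequalities. Neighborhood separation (Lemma~\ref{lem:separation}) will give upper bounds on sums of degrees at adjacent cycle vertices. Corollary~\ref{cor:two-step} gives lower bounds on sums at distance two along $D$. We then compare the two around the cycle.

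\textbf{Step 1 (upper bound on adjacent pairs).} Take an edge $x_ix_{i+1}$ of $D$ and distinct $a,b\in F$. By~\eqref{eq:separation}, $N_a(x_i)\cap N_b(x_{i+1})=\varnothing$. Moreover $x_i\notin N_a(x_i)$, and $x_i$ cannot lie in $N_b(x_{i+1})$ in a way that conflicts, so we count carefully. Since $x_i\notin N_a(x_i)$ and $x_{i+1}\notin N_b(x_{i+1})$, both sets lie in $V$, so $d_a(x_i)+d_b(x_{i+1})\le n$. This gives $\delta(x_i)+\delta(x_{i+1})\le n$. For a sharper count, note that $x_{i+1}\in N_a(x_i)$ would be compatible with separation only if $x_{i+1}\notin N_b(x_{i+1})$, which is automatic. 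So the bound is $\le n$, and it suffices.

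\textbf{Step 2 (propagate equalities around $D$).} Write $\delta_i=\delta(x_i)$. From Step 1 we have $\delta_i+\delta_{i+1}\le n$, and from Corollary~\ref{cor:two-step} we have $\delta_i+\delta_{i+2}\ge n$. Hence
\[
\delta_{i+2}\ge n-\delta_i\ge \delta_{i+1}
\qquad\text{and symmetrically}\qquad
\delta_i\ge n-\delta_{i+2}\ge\delta_{i+1}
\]
for all $i$. Thus every $\delta_{i+1}$ is at most both of its neighbours' values. Going around the cycle forces all $\delta_i$ to be equal, say to $d$. Then $2d\le n\le 2d$, so $n=2d$ is even and $\delta(x_i)=n/2$ on $V(D)$. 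For odd $m$ this is immediate from the comparisons. For even $m$, the chain $\delta_{i+1}\le\min(\delta_i,\delta_{i+2})$ applied at every index still forces constancy, because a strict local maximum would have to be bounded by its neighbours.

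\textbf{Step 3 (vertices off $D$).} This is the main obstacle. For $w\notin V(D)$ we must show $\delta(w)=n/2$, and we use Lemma~\ref{lem:detour} to put $w$-type vertices onto cycles. First, every rainbow $C_m$ satisfies the standing hypotheses, so Steps 1--2 apply to every rainbow $C_m$. It therefore suffices to show that every vertex lies on some rainbow $C_m$. We use the two-vertex extension of Lemma~\ref{lem:detour}: replacing $x_ix_{i+1}$ by $x_ipqx_{i+1}$ gives a rainbow $C_{m+2}$ $H$. Deleting $x_{i-1}$ or shortcutting via~\eqref{eq:LO} fails, since that would produce a $C_{m+1}$. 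Instead we try removing two consecutive cycle vertices. By Lemma~\ref{lem:shortcut}, $H$ already has $\delta(u_j)+\delta(u_{j+2})\ge n$. For the outside vertex $p$, this gives $\delta(p)\ge n-\delta(x_{i-1})$ when the positions align, which is $\ge n/2$ using the neighbour $x_i$ two steps away at distance two on $H$ (namely $x_{i-1},x_i,p$). For the upper bound $\delta(p)\le n/2$, we apply the separation argument to the edge $x_ip$. To do this, we realize $p$ on a rainbow $C_m$ by an exchange: connectivity and the availability propagation in the proof of Lemma~\ref{lem:detour} give a neighbour $p$ of each cycle vertex in each unused color, and we aim to swap $p$ for a cycle vertex $x_{i}$. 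If direct swapping is unavailable, we instead get the upper bound from $d_a(x_i)+d_b(p)\le n$, using a separation statement proved exactly as in Lemma~\ref{lem:separation} for the edge $x_ip$ inside the configuration. We expect this step, and the fact that $p$ ranges over all of $V\setminus V(D)$ (using connectivity to reach every outside vertex iteratively), to need the most care.
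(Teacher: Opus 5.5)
Your Steps 1 and 2 are correct and agree with the paper's argument on $V(D)$. The paper sums the two families of inequalities around $D$, while you compare them locally; either way $\delta(x_i)=n/2$ on every rainbow $C_m$. Your case split on the parity of $m$ is unnecessary, since $\delta(x_j)\le\delta(x_{j+1})$ for all $j$ already forces constancy. Your reduction in Step 3, that it suffices to put every vertex on some rainbow $C_m$, is also the paper's reduction.

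\textbf{The gap is that Step 3 never proves this.}
\begin{itemize}
\item The lower bound $\delta(p)\ge n/2$, obtained from Lemma~\ref{lem:shortcut} on the extended $C_{m+2}$, is valid. However, it applies only to vertices that occur as $p$ in some detour, and you give no argument that every outside vertex does.
\item The proposed upper bound is not available at all. Lemma~\ref{lem:separation} concerns edges of a rainbow $C_m$ when no rainbow $C_{m+1}$ exists. Its contradiction comes from inserting a common neighbor into such an edge to create a $C_{m+1}$. For an edge $x_ip$ of the $C_{m+2}$, the same insertion only produces a $C_{m+3}$, which is not forbidden. So there is no separation statement ``proved exactly as in Lemma~\ref{lem:separation}'' for that edge.
\item The appeal to connectivity ``to reach every outside vertex iteratively'' does not fill either hole.
\end{itemize}

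\textbf{The missing idea is to use the equality case you already have.}
Once $\delta(x_i)=n/2$ for all $i$, Step 1 reads
$n=\delta(x_i)+\delta(x_{i+1})\le d_a(x_i)+d_b(x_{i+1})=|N_a(x_i)\cup N_b(x_{i+1})|\le n$.
Hence $N_a(x_i)\sqcupdot N_b(x_{i+1})=V$ for every edge of $D$ and every ordered pair of distinct $a,b\in F$.

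Now take $w\notin V(D)$ and three distinct colors $a,b,c\in F$, which exist because $|F|\ge3$.
\begin{enumerate}
\item After reversing the orientation of $D$ and swapping $a,b$ if necessary, you may assume $w\in N_a(x_1)$.
\item The partition on $x_1x_2$ in colors $a,b$ gives $w\notin N_b(x_2)$.
\item The partition on $x_2x_3$ in colors $b,c$ then gives $w\in N_c(x_3)$.
\item Replacing $x_1x_2x_3$ by $x_1wx_3$ in colors $a,c$ yields a rainbow $C_m$ through $w$.
\end{enumerate}
Steps 1--2 applied to this new cycle give $\delta(w)=n/2$. No $C_{m+2}$ and no further separation argument are needed.
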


\begin{proof}[Proof of Theorem~\ref{thm:rigidity}]
First consider any rainbow $m$-cycle $D=x_1\cdots x_mx_1$.
For distinct unused colors $a,b$, Lemma~\ref{lem:separation} gives
\begin{equation}\label{eq:adj-upper}
 \delta(x_i)+\delta(x_{i+1})
 \le d_a(x_i)+d_b(x_{i+1})\le n.
\end{equation}
On the other hand, Corollary~\ref{cor:two-step} gives
$\delta(x_i)+\delta(x_{i+2})\ge n$.
Summing each family around $D$ yields
\[
 mn\le2\sum_{i=1}^m\delta(x_i)\le mn.
\]
Therefore equality holds in every inequality in the two families:
\begin{equation}\label{eq:degree-equalities}
 \delta(x_i)+\delta(x_{i+1})
 =\delta(x_i)+\delta(x_{i+2})=n.
\end{equation}
Comparing the second equality at $i$ with the adjacent equality at
$i+1$ gives $\delta(x_i)=\delta(x_{i+1})$. Hence
\begin{equation}\label{eq:cycle-half}
 \delta(x_i)=n/2\qquad(i\in[m]),
\end{equation}
and $n$ is even. The argument applies to every rainbow $m$-cycle.

We next consider vertices outside $D$. Equations
\eqref{eq:adj-upper} and \eqref{eq:cycle-half} show that, for every
adjacent pair on $D$ and every ordered pair of distinct unused colors,
\begin{equation}\label{eq:partition-full}
 N_a(x_i)\sqcupdot N_b(x_{i+1})=V.
\end{equation}
Fix $w\notin V(D)$ and three distinct colors $a,b,c\in F$.
By \eqref{eq:partition-full}, $w$ belongs to $N_a(x_1)$ or to
$N_b(x_2)$. Reversing the cyclic order and interchanging $a,b$
if necessary, assume that $w\in N_a(x_1)$.
Applying \eqref{eq:partition-full} to $x_1x_2$ in colors $a,b$
and to $x_2x_3$ in colors $b,c$ gives
\[
 w\notin N_b(x_2),\qquad w\in N_c(x_3).
\]
Replace $x_1x_2x_3$ in $D$ by $x_1wx_3$, assigning colors
$a,c$ to the new edges. This is a rainbow $m$-cycle containing
$w$, so \eqref{eq:cycle-half}, applied to this cycle, yields
$\delta(w)=n/2$. Since $w$ was arbitrary, the theorem follows.
\end{proof}

\section{Rooted cycle rigidity and boundary lengths}\label{sec:rooted}

Throughout this section, a cycle is \emph{$r$-rooted} if it contains a prescribed vertex
$r\in V$.  The extension machinery from Section~\ref{sec:rigidity} remains available; the only
additional issue is whether a shortening deletes $r$.

\subsection{Root-preserving extensions}\label{sec:rooted-extensions}

\begin{lemma}[Rooted endpoint crossing]\label{lem:rooted-path}
Let $P=z_1\cdots z_k$ be a rainbow path containing $r$, where $3\le k\le n-1$, and suppose
there is no $r$-rooted rainbow $C_k$ on $V(P)$. For distinct colors
$a,b\notin\col(P)$,
\[
 d_a(z_1,V(P))+d_b(z_k,V(P))\le k-1.
\]
If $\delta(z_1)+\delta(z_k)\ge n$, then
\[
 d_a(z_1,V\setminus V(P))+d_b(z_k,V\setminus V(P))\ge n-k+1,
\]
and the two outside endpoint neighborhoods intersect.
If equality holds in the first display, then for every $j\in[k-1]$ exactly one of
$z_1z_{j+1}\in E(G_a)$ and $z_jz_k\in E(G_b)$ holds.
\end{lemma}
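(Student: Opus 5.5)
The plan is to repeat the proof of Lemma~\ref{lem:path} verbatim. The one point to check is that every cycle produced in that argument has vertex set exactly $V(P)$. Since $r\in V(P)$, each such cycle is then automatically $r$-rooted. So the weaker hypothesis, that no $r$-rooted rainbow $C_k$ exists on $V(P)$, suffices wherever Lemma~\ref{lem:path} invoked ``no rainbow $C_k$''.

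The steps, in order, are these.

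\emph{Step 1.} I first show that $z_1z_k\notin E(G_a)\cup E(G_b)$. If the edge were present, giving it color $a$ (or $b$) would close $P$ into a rainbow cycle on all $k$ vertices of $V(P)$. That cycle passes through $r$, which contradicts the hypothesis.

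\emph{Step 2.} Define
\[
I=\{j\in[k-1]:z_1z_{j+1}\in E(G_a)\},\qquad J=\{j\in[k-1]:z_jz_k\in E(G_b)\}.
\]
Suppose $j\in I\cap J$. Delete $z_jz_{j+1}$ and add $z_1z_{j+1}$ in color $a$ and $z_jz_k$ in color $b$. The colors $a,b$ are distinct and unused on $P$, so the result is a rainbow cycle. Its vertex set is still exactly $V(P)\ni r$, so it is a forbidden $r$-rooted rainbow $C_k$. Hence $I\cap J=\varnothing$.

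\emph{Step 3.} Every $G_a$-neighbor of $z_1$ in $V(P)$ has the form $z_{j+1}$ with $j\in[k-1]$, since $z_1$ is not its own neighbor. Likewise every $G_b$-neighbor of $z_k$ in $V(P)$ has the form $z_j$ with $j\in[k-1]$. Therefore the inside sum equals $|I|+|J|\le k-1$. If equality holds, then $I\sqcupdot J=[k-1]$, which is exactly the stated dichotomy for each $j$.

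\emph{Step 4.} Under $\delta(z_1)+\delta(z_k)\ge n$ we have $d_a(z_1)+d_b(z_k)\ge n$. Subtracting the inside bound gives the outside bound $n-k+1$. This exceeds $|V\setminus V(P)|=n-k$, so the two outside neighborhoods intersect by pigeonhole.

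There is no real obstacle here. The only point needing care is Step 2, where one must observe that the rerouted cycle keeps the full vertex set $V(P)$ and therefore contains $r$. That is why the hypothesis can be restricted to rooted cycles on $V(P)$.
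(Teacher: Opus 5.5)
Your proposal is correct and matches the paper's proof, which likewise reruns the argument of Lemma~\ref{lem:path} and notes that every cycle produced by a crossing has vertex set exactly $V(P)\ni r$. Your write-up simply spells out the disjointness of $I$ and $J$, the equality case, and the pigeonhole step that the paper leaves implicit.
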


\begin{proof}
The proof of Lemma~\ref{lem:path} applies verbatim: every cycle produced by a crossing uses
exactly the vertex set $V(P)$ and therefore still contains $r$.
\end{proof}

\begin{lemma}[Root-preserving shortcut]\label{lem:rooted-shortcut}
Let $H=u_1\cdots u_Lu_1$ be an $r$-rooted rainbow cycle with $L\ge4$, and suppose that no
$r$-rooted rainbow $C_{L-1}$ exists. If $u_{i+1}\ne r$, then
\[
 \delta(u_i)+\delta(u_{i+2})\ge n.
\]
\end{lemma}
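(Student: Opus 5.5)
The plan is to repeat the proof of Lemma~\ref{lem:shortcut}, adding only the check that the shortened cycle still passes through the root. Suppose, for a contradiction, that $\delta(u_i)+\delta(u_{i+2})<n$ for some $i$ with $u_{i+1}\ne r$.

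The two consecutive edges $u_iu_{i+1}$ and $u_{i+1}u_{i+2}$ of $H$ keep their distinct assigned colors, say $\alpha$ and $\beta$. Hence $u_iu_{i+1}u_{i+2}$ is a rainbow path of length two, and $d^c(u_i,u_{i+2})\le2$. (We have $u_i\ne u_{i+2}$ because $L\ge4$.) Condition~\eqref{eq:LO} then forces $u_iu_{i+2}\in\Ec$.

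Next, delete $u_{i+1}$ from $H$ and add the chord $u_iu_{i+2}$ with color $\alpha$. This color is legitimate because the chord is common, so it lies in $G_\alpha$. All other edges keep their colors, and $\beta$ is simply released, so the new cycle is rainbow. Because $L\ge4$, it is a genuine cycle of length $L-1\ge3$.

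The only point needing care is the root. The deleted vertex is $u_{i+1}$, which by hypothesis is not $r$, and every other vertex of $H$ is retained. Since $r\in V(H)$, the new cycle is an $r$-rooted rainbow $C_{L-1}$, contradicting the assumption that none exists. Therefore $\delta(u_i)+\delta(u_{i+2})\ge n$.

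There is no real obstacle here; the hypothesis $u_{i+1}\ne r$ is used exactly once, to guarantee that the shortened cycle still contains the root.
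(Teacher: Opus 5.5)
Your proposal is correct and follows the paper's proof essentially verbatim: use the rainbow two-edge path $u_iu_{i+1}u_{i+2}$ and \eqref{eq:LO} to make the chord $u_iu_{i+2}$ common, give it a released color to shorten $H$ by one, and note that only the non-root vertex $u_{i+1}$ is removed. Your extra remarks ($u_i\ne u_{i+2}$ and the length bound $L-1\ge3$) are harmless sanity checks that the paper leaves implicit.
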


\begin{proof}
The two consecutive edges $u_iu_{i+1},u_{i+1}u_{i+2}$ form a rainbow $2$-path. If the displayed
sum were smaller than $n$, condition~\eqref{eq:LO} would make $u_iu_{i+2}$ common. Replacing
$u_iu_{i+1}u_{i+2}$ by this chord, in either released color, deletes only the non-root vertex
$u_{i+1}$ and therefore gives an $r$-rooted rainbow $C_{L-1}$, a contradiction.
\end{proof}

Fix now an $r$-rooted rainbow cycle
\[
 D=x_0x_1\cdots x_{m-1}x_0,\qquad x_0=r,
\]
where $4\le m\le n-3$, and assume that no $r$-rooted rainbow $C_{m+1}$ exists. Put
$F=[n]\setminus\col(D)$, so $|F|\ge3$.

\begin{lemma}[Rooted prescribed extension]\label{lem:rooted-detour}
For every edge $xy$ of $D$, with assigned color $e$, and every ordered pair of distinct colors
$a,b\in F$, there are distinct $p,q\notin V(D)$ such that
\[
 xp\in E(G_a),\qquad pq\in E(G_e),\qquad qy\in E(G_b).
\]
Thus replacing $xy$ by $xpqy$ gives an $r$-rooted rainbow $C_{m+2}$.
\end{lemma}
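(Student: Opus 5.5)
We will follow the proof of Lemma~\ref{lem:detour}, replacing each ingredient by its rooted counterpart. The key observation is that an extension of $D$ or a path built from $D$ never deletes a vertex of $D$. So every cycle produced along the way still contains $r$. In the same way, the closing step in the transfer argument yields an $r$-rooted cycle whenever the path contains $r$.

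\emph{Rooted transfer.} Let $xy\in E(D)$ have color $e$, let $p\notin V(D)$, and suppose $xp\in E(G_a)$ with $a\in F$. Delete $xy$ from $D$ and prepend $px$ in color $a$. This gives a rainbow $p$--$y$ path $P$ with $V(P)=V(D)\cup\{p\}$, so $P$ has $m+1$ vertices, contains $r$, and leaves $e$ and every $b\in F\setminus\{a\}$ unused. The path $pxy$, colored $a,e$, shows $d^c(p,y)\le2$. If $\delta(p)+\delta(y)<n$, then \eqref{eq:LO} makes $py$ common, and closing $P$ with $py$ in color $b$ gives an $r$-rooted rainbow $C_{m+1}$, a contradiction. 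Hence $\delta(p)+\delta(y)\ge n$.

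We now apply Lemma~\ref{lem:rooted-path} to $P$ with endpoint colors $e,b$. Its hypothesis holds because any cycle on $V(P)$ has length $m+1$ and contains $r$. Note that $k=m+1\le n-2$, which is within the allowed range. The lemma gives $q\notin V(P)$ with $pq\in E(G_e)$ and $qy\in E(G_b)$. The colors $a,e,b$ are pairwise distinct, so replacing $xy$ by $xpqy$ gives a rainbow $C_{m+2}$. It contains all of $V(D)$, hence $r$.

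\emph{Propagation of availability.} Call $(x,a)\in V(D)\times F$ available if $x$ has a $G_a$-neighbor outside $D$.
\begin{itemize}
\item Since $m\le n-3$, the complement of $V(D)$ is nonempty. Each $G_i$ is $2$-connected, hence connected, so for any fixed $a_0\in F$ some pair $(x,a_0)$ is available.
\item The rooted transfer, read in either orientation of a cycle edge, gives the rule: if $(x,a)$ is available, $xy\in E(D)$ and $b\in F\setminus\{a\}$, then $(y,b)$ is available.
\item Availability at a vertex does not depend on the color. Given $(x,a)$ available and $b\ne a$, use $|F|\ge3$ to pick $c\in F\setminus\{a,b\}$. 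Going $x\to y$ with target color $c$ and then $y\to x$ with target color $b$ shows $(x,b)$ is available.
\item Moving around the cycle $D$ then makes every pair $(v,f)$ available.
\end{itemize}
For the prescribed $xy$ and ordered pair $a,b$, we take a $G_a$-neighbor $p\notin V(D)$ of $x$ and apply the rooted transfer with second color $b$.

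\emph{Main point to check.} Every application of the no-$C_{m+1}$ hypothesis must concern a cycle that really contains $r$. This holds because each cycle produced contains all of $V(D)$ or all of $V(P)\supseteq V(D)$. Also, the shortcut-type step $py\in\Ec$ only closes a path and removes no vertex, so no appeal to Lemma~\ref{lem:rooted-shortcut}'s non-root condition is needed.
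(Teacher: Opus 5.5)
Your proposal is correct and follows the paper's proof essentially step for step: the rooted transfer closes the $(m+1)$-vertex path $P\ni r$ with the chord $py$ and then applies Lemma~\ref{lem:rooted-path}, and the same availability propagation around $D$ finishes the argument. One small point: in this section the standing hypothesis (see Theorem~\ref{thm:rooted-rigidity}) is only that every member is connected, not $2$-connected, but connectivity is all your first bullet actually uses.
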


\begin{proof}
We first establish the rooted transfer rule directly.  Call $(x,a)\in V(D)\times F$
\emph{available} if $x$ has a $G_a$-neighbor outside $D$. Suppose $(x,a)$ is available, let
$xy$ be a cycle edge of color $e$, choose $b\in F\setminus\{a\}$, and take
$p\notin V(D)$ with $xp\in E(G_a)$. Delete $xy$, traverse the resulting $x$--$y$ path in
$D$, and prepend $px$ in color $a$. The resulting rainbow path $P$ has vertex set
$V(D)\cup\{p\}$, hence contains $r$, and
\[
 \col(P)=(\col(D)\setminus\{e\})\cup\{a\};
\]
thus $e$ and $b$ are unused on $P$.  The two-edge path $pxy$ in colors $a,e$ shows that
$d^c(p,y)\le2$. If $\delta(p)+\delta(y)<n$, then~\eqref{eq:LO} makes $py$ common, and
assigning it color $b$ closes $P$ to an $r$-rooted rainbow $C_{m+1}$, contrary to the
standing hypothesis. Hence $\delta(p)+\delta(y)\ge n$. Lemma~\ref{lem:rooted-path}, applied
to $P$ with endpoint colors $e,b$, therefore gives
$q\notin V(D)\cup\{p\}$ with
\[
 pq\in E(G_e),\qquad qy\in E(G_b).
\]
In particular, $(y,b)$ is available. We have proved
\[
 (x,a)\text{ available},\quad xy\in E(D),\quad b\in F\setminus\{a\}
 \quad\Longrightarrow\quad (y,b)\text{ available}.
\]

At least one pair is available because $G_f$ is connected for every $f\in F$ and
$V(D)$ is a nonempty proper subset of $V$.  Starting from any available $(x,a)$, let
$b\in F\setminus\{a\}$ and choose $c\in F\setminus\{a,b\}$. Applying the transfer rule
along one cycle edge and then back along the same edge shows that $(x,b)$ is available.
Thus every unused color is available at $x$; iterating the transfer rule around $D$ shows
that every pair $(v,f)\in V(D)\times F$ is available. For the prescribed edge $xy$ and
ordered colors $a,b$, choose $p\notin V(D)$ with $xp\in E(G_a)$ and apply the rooted
transfer rule once more. This gives the required $q$ and the stated rooted $C_{m+2}$.
\end{proof}

\begin{lemma}[Rooted neighborhood separation]\label{lem:rooted-separation}
For every edge $xy\in E(D)$ and distinct $a,b\in F$,
\[
 N_a(x)\cap N_b(y)\subseteq\{r\}.
\]
In particular, if $xy$ is incident with $r$, then the intersection is empty.
\end{lemma}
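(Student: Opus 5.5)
We would follow the proof of Lemma~\ref{lem:separation} line by line, replacing each unrooted tool by its rooted counterpart: Lemma~\ref{lem:rooted-path} for endpoint crossing, Lemma~\ref{lem:rooted-shortcut} for shortcuts, and Lemma~\ref{lem:rooted-detour} for prescribed extensions. Every cycle of length $m+1$ that the argument builds must then be checked to contain $r$. The only step of the unrooted proof that can delete a vertex of $D$ is the one where the internal vertex $z$ is removed or rerouted, so the exceptional vertex in the conclusion should be exactly $z=r$.

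The first step is the case of an outside vertex. If $z\notin V(D)$ lies in $N_a(x)\cap N_b(y)$, inserting $z$ into $xy$ gives a rainbow $C_{m+1}$ containing all of $V(D)$, hence rooted. So we may assume $z\in V(D)\setminus\{x,y\}$ with $xz\in E(G_a)$ and $zy\in E(G_b)$, and, aiming at the stated inclusion, also $z\neq r$. We then rerun the argument with $s,t,r',h$ (renaming the colour $r$ of the unrooted proof, to avoid a clash with the root) and $H,Q,W$ defined as before.

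Each construction is then checked for containing the root.
\begin{enumerate}
\item The cycle $H$ contains $V(D)\ni r$.
\item The path $Q=H-z$ still contains $r$ because $z\ne r$. This is what allows Lemma~\ref{lem:rooted-path} to be applied to $Q$.
\item The bound \eqref{eq:st-high} now comes from Lemma~\ref{lem:rooted-shortcut}, which applies because the deleted middle vertex $z$ is not $r$.
\item The rerouted cycle $s\,D\,x\,z\,y\,D\,t\,w\,s$ has vertex set $V(D)\cup\{w\}$.
\item The insertions of $w$ into $sz$ or $zt$ keep all of $V(D)$.
\item The path $P$ has vertex set $V(D)\cup\{w\}$, and $R$ contains all of $V(D)$.
\item The final replacements of $szt$ by $swvt$ or $swq_0t$ delete only $z$.
\end{enumerate}
Since $z\neq r$, every $C_{m+1}$ produced in the argument is rooted, so each contradiction goes through verbatim. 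This gives $N_a(x)\cap N_b(y)\subseteq\{r\}$.

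For the last assertion, suppose $xy$ is incident with $r$, say $x=r$ or $y=r$. The only possible common vertex is $z=r$. But $z\in V(D)\setminus\{x,y\}$, and also $r\notin N_a(r)$ since there are no loops. Hence the intersection is empty. The main obstacle is the bookkeeping in the second and third steps: one must make sure the degree-sum inputs use only the rooted lemmas, which require the deleted vertex to be a non-root, and that each application of Lemma~\ref{lem:rooted-path} is to a path containing $r$. Both conditions hold precisely because $z\neq r$ and all other modifications preserve $V(D)$.
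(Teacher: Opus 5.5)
Your proposal is correct and is essentially the paper's own proof. The paper likewise reruns the proof of Lemma~\ref{lem:separation} with Lemmas~\ref{lem:rooted-detour}, \ref{lem:rooted-shortcut} and \ref{lem:rooted-path} in place of their unrooted counterparts, noting that $Q$, $P$ and $R$ contain $r$ and that every forbidden $C_{m+1}$ deletes at most $z\ne r$, and it handles the root-incident case by the absence of loops; your renaming of the colour $r$ to $r'$ and your itemized root check are a more explicit version of the same bookkeeping.
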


\begin{proof}
An intersection vertex outside $D$ immediately inserts into the edge $xy$ and produces an
$r$-rooted $C_{m+1}$. Suppose $z\in V(D)\setminus\{x,y,r\}$ lies in the intersection.
We repeat the proof of Lemma~\ref{lem:separation}, but with the rooted ingredients made explicit.
The prescribed detour is supplied by Lemma~\ref{lem:rooted-detour}. In the auxiliary rooted
$C_{m+2}$, the only shortcut used in the unrooted proof is across the two-edge segment $szt$;
its middle vertex is $z\ne r$, so Lemma~\ref{lem:rooted-shortcut} gives the required degree-sum
lower bound.  After deleting $z$, the path $Q$ still contains $r$, so every endpoint-crossing
application is justified by Lemma~\ref{lem:rooted-path}.  The later auxiliary paths $P$ and $R$
contain all vertices of $D$ (and hence $r$), while each explicitly constructed forbidden
$C_{m+1}$ deletes at most the vertex $z$.  Since $z\ne r$, all these contradictions are rooted.
Thus the proof of Lemma~\ref{lem:separation} carries through without any assumption about
unrooted $C_{m+1}$'s, and no such $z$ exists.
If $xy$ is incident with $r$, the only possible intersection point from the first assertion is
$r$ itself, but a simple graph has no loop at $r$.
\end{proof}

Two consequences will be used repeatedly.  First,
if $x_{i+1}\ne r$, then
\begin{equation}\label{eq:rooted-two-step-safe}
 \delta(x_i)+\delta(x_{i+2})\ge n.
\end{equation}
Indeed, extend an edge of $D$ different from the two edges of
$x_ix_{i+1}x_{i+2}$ by Lemma~\ref{lem:rooted-detour}. The resulting rooted $C_{m+2}$ still contains
the consecutive segment $x_ix_{i+1}x_{i+2}$, whose middle vertex is not $r$; applying
Lemma~\ref{lem:rooted-shortcut} gives the displayed inequality.
Second, for every $c\in F$,
\begin{equation}\label{eq:root-second-exclusion}
 rx_2,\;rx_{m-2}\notin E(G_c).
\end{equation}
For example, if $rx_2\in E(G_c)$, choose two other unused colors and extend an edge different
from $rx_1,x_1x_2$ to an $r$-rooted $C_{m+2}$; then replace $rx_1x_2$ by $rx_2$ in color $c$.
The other assertion is symmetric.

\subsection{Rooted missing-length rigidity}\label{sec:rooted-rigidity-proof}

\begin{theorem}[Rooted missing-length rigidity]\label{thm:rooted-rigidity}
Let $\GraphSys=(G_1,\ldots,G_n)$ satisfy~\eqref{eq:LO}, suppose every member is connected, and
fix $r\in V$.  Let $4\le m\le n-3$.  If $\GraphSys$ contains an $r$-rooted rainbow $C_m$ but no
$r$-rooted rainbow $C_{m+1}$, then $n$ is even and
\[
 G_1=\cdots=G_n=K_{n/2,n/2}
\]
with a common balanced bipartition.
\end{theorem}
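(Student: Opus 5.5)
The plan is to mimic the proof of Theorem~\ref{thm:rigidity}, get degree rigidity $\delta\equiv n/2$, and then upgrade this to the global Ore condition~\eqref{eq:global-ore}. Once~\eqref{eq:global-ore} holds, Theorem~\ref{thm:global-vertex} applies. Its first alternative would give an $r$-rooted rainbow $C_{m+1}$ (note $m+1\in[4,n]$), contrary to hypothesis. Hence the balanced complete bipartite alternative holds. Indeed, if $\delta(v)=n/2$ for every $v$, then $\delta(u)+\delta(v)=n$ for every pair, so~\eqref{eq:global-ore} holds trivially. The whole task is therefore to prove $\delta(v)=n/2$ for every $v\in V$.

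The difficulty is the root. Lemma~\ref{lem:rooted-separation} gives $\delta(x_i)+\delta(x_{i+1})\le n$ on every edge of $D$, via $d_a(x_i)+d_b(x_{i+1})\le n+1$, and this improves to $\le n$ whenever $r$ is not in the intersection, in particular on the two edges at $r$. The two-step inequality~\eqref{eq:rooted-two-step-safe} holds for all $i$ except the pair $(x_{m-1},x_1)$ straddling $r$. Summing adjacent and two-step inequalities as before gives a slack of one or two units concentrated near $r$. Following the introduction, I would choose $D$ (among $r$-rooted rainbow $m$-cycles, with the unused colors) to maximize $\delta(x_1)+\delta(x_{m-1})$, the degrees of the two root neighbors. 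I would then show that if $\delta(x_{m-1})+\delta(x_1)<n$, one can re-route near $r$ to a new rooted $m$-cycle with a larger such sum. Two tools are available for this. The first is~\eqref{eq:root-second-exclusion}, which forbids $rx_2,rx_{m-2}$ in unused colors. The second is that $d^c(x_{m-1},x_1)\le 2$ via $r$, so~\eqref{eq:LO} forces $x_{m-1}x_1\in\Ec$. Using that chord together with a Lemma~\ref{lem:rooted-detour} extension elsewhere should produce a rooted cycle of length $m+1$, giving a contradiction. In fact this looks cleaner than a maximality argument: the chord $x_{m-1}x_1$ in a released color shortens the rooted $C_{m+2}$ to length $m+1$ but drops $r$. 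So that alone fails, and this is exactly where the maximal choice must enter. If $r$ lies in some $N_a(x_i)\cap N_b(x_{i+1})$, I would swap $r$ into the position between $x_i,x_{i+1}$, producing a rooted $m$-cycle with different root neighbors. Maximality should then bound the defect, yielding all equalities $\delta(x_i)+\delta(x_{i+1})=\delta(x_i)+\delta(x_{i+2})=n$ and hence $\delta(x_i)=n/2$ on $D$, including at $r$ (use the two-step equalities away from $r$ and the adjacent equalities at $r$).

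For vertices $w\notin V(D)$, I would repeat the partition argument of Theorem~\ref{thm:rigidity}: $N_a(x_i)\sqcupdot N_b(x_{i+1})=V$ on an edge not touching $r$, and insertion of $w$ in place of $x_2$, using a segment $x_1x_2x_3$ with $x_2\ne r$. This yields an $r$-rooted $m$-cycle through $w$ and hence $\delta(w)=n/2$. Since $m\ge4$, such segments avoiding $r$ as middle vertex exist, and the edges used avoid the possible defect at $r$. The main obstacle is the maximal-root step: controlling the unit defect in Lemma~\ref{lem:rooted-separation} and the missing two-step inequality across $r$. I expect to need a case analysis on whether $r\in N_a(x_i)\cap N_b(x_{i+1})$ for some edge, using the re-rooting swap and maximality each time.
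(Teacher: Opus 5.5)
Your overall architecture is the paper's: maximize $\rho(D)=\delta(x_1)+\delta(x_{m-1})$, use the common chord $x_1x_{m-1}$ to move $r$ onto a defect edge, invoke \eqref{eq:root-second-exclusion}, and finish with \eqref{eq:global-ore} and Theorem~\ref{thm:global-vertex}. Only $\delta\ge n/2$ is actually needed there. There is a genuine gap, though, at the step ``maximality should then bound the defect, yielding all equalities $\ldots$ and hence $\delta(x_i)=n/2$ on $D$.''

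\textbf{Why the step fails.} The swap needs $x_1x_{m-1}\in\Ec$, and \eqref{eq:LO} supplies this only when $\rho(D)<n$. So maximality can only be used inside a proof by contradiction of $\rho(D)\ge n$. Under $\rho(D)<n$, every adjacent sum is at most $n$. Together with \eqref{eq:rooted-two-step-safe} this gives $\delta(x_1)\le\cdots\le\delta(x_{m-1})\le\delta(x_0)$, and then $\rho(D)\ge n$. Once $\rho(D)\ge n$ is known, the chord is gone. Any non-root edge other than $x_1x_2,x_2x_3,x_{m-3}x_{m-2},x_{m-2}x_{m-1}$ may still have $r\in N_a(x_i)\cap N_b(x_{i+1})$, which gives only $\delta(x_i)+\delta(x_{i+1})\le n+1$. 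The slack is therefore one unit per such edge, not ``one or two units near $r$.''

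Summation then does not close. Take $m=12$ and write $\delta(x_j)=n/2+d_j$. The vector $d=(0,0,0,0,1,0,-1,0,1,0,0,0)$ satisfies every two-step inequality, including the one across $r$. It satisfies every adjacent bound, with defects only on $x_3x_4,x_4x_5,x_7x_8,x_8x_9$. Yet $d_6=-1$. So neither equality nor even $\delta\ge n/2$ on $D$ follows from the inequalities you have.

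\textbf{What the paper does instead.}
\begin{enumerate}
\item It derives $\delta=n/2$ only at $x_0,x_1,x_2,x_3,x_{m-2},x_{m-1}$. This uses $\rho(D)\ge n$ plus strict separation on the two root edges and on $x_1x_2,x_2x_3,x_{m-2}x_{m-1}$; the latter are strict by \eqref{eq:root-second-exclusion}.
\item It deduces $N_f(x_0)=N_f(x_2)=A$ and $N_f(x_1)=N_f(x_3)=B$ for all $f\in F$.
\item It proves $\delta(w)=n/2$ for $w\notin V(D)$. Here your sketch also needs care. A vertex $w\in A$ cannot replace $x_2$; it must replace the root neighbour $x_1$. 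The new cycle is then no longer a $\rho$-maximizer, so $\delta(w)=n/2$ has to come from local bounds (the safe two-step inequality with $x_3$ and strict separation on $rw$), not from a statement about maximizers.
\item It handles the middle vertices of $D$ with a separate idea that your plan lacks. For each edge $x_ix_{i+1}$, take a detour $x_ipqx_{i+1}$ from Lemma~\ref{lem:rooted-detour}. If $\delta(x_i)<n/2=\delta(q)$, then \eqref{eq:LO} makes $x_iq$ common, and shortening through $p$ gives a rooted $C_{m+1}$.
\end{enumerate}
This yields $\delta\ge n/2$ on all of $V$, which is all that \eqref{eq:global-ore} requires.
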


\begin{proof}[Proof of Theorem~\ref{thm:rooted-rigidity}]
Choose, among all $r$-rooted rainbow $m$-cycles,
\[
 D=x_0x_1\cdots x_{m-1}x_0,\qquad x_0=r,
\]
so that
\[
 \rho(D)=\delta(x_1)+\delta(x_{m-1})
\]
is maximum.  We claim
\begin{equation}\label{eq:rho-lower}
 \rho(D)\ge n.
\end{equation}
Suppose instead that $\rho(D)<n$. The rainbow two-edge path
$x_{m-1}rx_1$ and~\eqref{eq:LO} give $x_{m-1}x_1\in E_\cap$.
For any edge $xy$ of $D$, Lemma~\ref{lem:rooted-separation} gives
$\delta(x)+\delta(y)\le n$ unless the edge is non-root and, for the chosen ordered pair of
unused colors, the unique possible intersection of the two endpoint neighborhoods is $r$.
In that exceptional case write $e$ for the color assigned to $xy$.  Delete
$rx_1,rx_{m-1},xy$, add $x_1x_{m-1}$ in the released color $e$, and add $xr,ry$ in the two
unused colors witnessing the root intersection.  The result is a single cycle on $V(D)$:
it cuts the old cycle at $xy$ and at the two root edges and reconnects the three resulting
pieces through $r$ and the common chord $x_1x_{m-1}$.  This remains valid when $xy$ is incident
with $x_1$ or $x_{m-1}$; an underlying root edge may simply be recolored with an unused color.
The retained colors, together with $e$ and the two unused colors, are pairwise distinct, so this
is another $r$-rooted rainbow $C_m$.  Its two root neighbors are $x$ and $y$ (with the obvious
coincidence when one is $x_1$ or $x_{m-1}$), and maximality of $\rho(D)$ gives
$\delta(x)+\delta(y)\le\rho(D)<n$. Hence
\begin{equation}\label{eq:all-adj-upper}
 \delta(x_i)+\delta(x_{i+1})\le n\qquad(i\in\mathbb Z_m).
\end{equation}
Together with~\eqref{eq:rooted-two-step-safe}, these inequalities imply
\[
 \delta(x_1)\le\delta(x_2)\le\cdots\le\delta(x_{m-1})\le\delta(x_0).
\]
Moreover,
$\delta(x_0)+\delta(x_2)\ge n$ and
$\delta(x_{m-1})+\delta(x_0)\le n$, so
$\delta(x_2)\ge\delta(x_{m-1})$. Thus
$\delta(x_2)=\cdots=\delta(x_{m-1})=n-\delta(x_0)$.
Finally $\delta(x_1)+\delta(x_3)\ge n$ gives
$\delta(x_1)\ge\delta(x_0)$, whence
$\rho(D)\ge n$, a contradiction. This proves~\eqref{eq:rho-lower}.

By~\eqref{eq:root-second-exclusion}, Lemma~\ref{lem:rooted-separation} is strict on the edges
$x_1x_2$ and $x_{m-2}x_{m-1}$, as well as on the two root edges. Therefore
\[
\begin{aligned}
 \delta(x_0)+\delta(x_1)&\le n,&
 \delta(x_1)+\delta(x_2)&\le n,\\
 \delta(x_{m-2})+\delta(x_{m-1})&\le n,&
 \delta(x_{m-1})+\delta(x_0)&\le n.
\end{aligned}
\]
We spell out the resulting equalities.  From~\eqref{eq:rho-lower} and
$\delta(x_0)+\delta(x_1)\le n$ we obtain
$\delta(x_{m-1})\ge\delta(x_0)$.  On the other hand,
$\delta(x_{m-2})+\delta(x_0)\ge n$ and
$\delta(x_{m-2})+\delta(x_{m-1})\le n$ give the reverse inequality.  Hence
$\delta(x_{m-1})=\delta(x_0)$, and then
$\delta(x_0)+\delta(x_1)=n$.  Moreover,
$\delta(x_0)+\delta(x_2)\ge n$ and
$\delta(x_1)+\delta(x_2)\le n$ yield
$\delta(x_0)\ge\delta(x_1)$.  Since
$2\delta(x_0)=\delta(x_0)+\delta(x_{m-1})\le n$ while
$\delta(x_0)+\delta(x_1)=n$, it follows that
$\delta(x_0)=\delta(x_1)=\delta(x_{m-1})=n/2$.  The two remaining displayed
upper bounds, together with the corresponding safe two-step inequalities, now give
\begin{equation}\label{eq:root-local-half}
 \delta(x_0)=\delta(x_1)=\delta(x_2)=\delta(x_{m-2})=\delta(x_{m-1})=\frac n2.
\end{equation}
The same strict-separation argument on $x_2x_3$ gives $\delta(x_3)=n/2$ (when indices coincide,
this is already contained in~\eqref{eq:root-local-half}).

Now fix three distinct colors $a,b,c\in F$. Strict separation on the root edge $x_0x_1$,
together with~\eqref{eq:root-local-half}, gives
\[
 N_a(x_0)\sqcupdot N_b(x_1)=V.
\]
Replacing $a$ by $c$ while keeping $b$ fixed shows
$N_a(x_0)=N_c(x_0)=V\setminus N_b(x_1)$; similarly, keeping $a$ fixed and varying the second
color shows that $N_b(x_1)$ is independent of $b\in F$. Since $|F|\ge3$, the same comparison
covers every unused color. Hence there is a partition
\[
 V=A\sqcupdot B,\qquad |A|=|B|=n/2,
\]
such that
\[
 N_f(x_0)=A,\qquad N_f(x_1)=B\qquad(f\in F).
\]
Next,~\eqref{eq:root-second-exclusion} rules out the possible root overlap on $x_1x_2$.
Thus strict separation on $x_1x_2$, together with $N_f(x_1)=B$ and
$\delta(x_2)=n/2$, gives $N_f(x_2)=A$ for every $f\in F$. The same exclusion rules out the
root overlap on $x_2x_3$, and strict separation there, using $\delta(x_3)=n/2$, gives
$N_f(x_3)=B$. Consequently
\begin{equation}\label{eq:root-partition}
 N_f(x_0)=A,\quad N_f(x_1)=B,\quad N_f(x_2)=A,\quad N_f(x_3)=B
 \qquad(f\in F).
\end{equation}

Let $w\notin V(D)$. If $w\in A$, replace $x_0x_1x_2$ by $x_0wx_2$ using two distinct
unused colors. On the resulting rooted $C_m$, the safe two-step inequality for $w,x_3$ and
strict separation on the root edge $x_0w$ give $\delta(w)=n/2$. If $w\in B$, replace
$x_1x_2x_3$ by $x_1wx_3$. Then the safe two-step inequality for $x_0,w$ gives the lower bound,
while~\eqref{eq:root-second-exclusion}, applied to the new rooted cycle, makes separation on
$x_1w$ strict and gives the upper bound. Thus
\begin{equation}\label{eq:outside-half}
 \delta(w)=n/2\qquad(w\notin V(D)).
\end{equation}

It remains only to rule out a cycle vertex of smaller system degree. For an edge $x_ix_{i+1}$,
apply Lemma~\ref{lem:rooted-detour} and write the new segment as
$x_i-p-q-x_{i+1}$, where $p,q\notin V(D)$. By~\eqref{eq:outside-half}, $\delta(q)=n/2$.
If $\delta(x_i)<n/2$, then the rainbow path $x_i p q$ and~\eqref{eq:LO} make $x_iq$ common;
shortening through $p$ gives an $r$-rooted rainbow $C_{m+1}$, a contradiction. Hence
\[
 \delta(v)\ge n/2\qquad(v\in V).
\]
Consequently~\eqref{eq:global-ore} holds. Theorem~\ref{thm:global-vertex} now implies that either
$\GraphSys$ is $[4,n]$-rainbow vertex-pancyclic or it is the common balanced complete bipartite
system. The first alternative contradicts the assumed absence of an $r$-rooted $C_{m+1}$.
\end{proof}

\subsection{The rooted 4-cycle}\label{sec:rooted-c4}

\begin{lemma}[Rooted $C_4$]\label{lem:rooted-c4}
Assume every $G_i$ is $2$-connected and $\GraphSys$ satisfies~\eqref{eq:LO}. Then every
$r\in V$ lies on a rainbow $C_4$.
\end{lemma}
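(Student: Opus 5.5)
We want every vertex $r$ to lie on a rainbow $C_4$, assuming each $G_i$ is $2$-connected and \eqref{eq:LO} holds. The introduction says the intended tool is a color--neighbor incidence graph together with K\H{o}nig's theorem, and the plan follows that route.

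\emph{Step 1: a rainbow $3$-path with $r$ as an inner vertex.} Since every $G_i$ is $2$-connected, $d_{G_i}(r)\ge2$ for all $i$. Take two distinct $G$-neighbors $u,v$ of $r$ with distinct colors, so that $urv$ is a rainbow $2$-path; such a choice exists because all members have degree at least $2$ at $r$ and there are $n\ge4$ colors. Next use $2$-connectivity of a third member, in a third color, to find an edge leaving $\{u,r,v\}$ at $v$ (or at $u$). This gives a rainbow path $urvw$. The aim is to close it to a rainbow $C_4$ by a fourth color on $wu$.

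\emph{Step 2: forcing the closing edge.} We have $d^c(u,w)\le2$ via $uvw$ only if $uv$ is an edge; in general we use the rainbow $2$-paths through $r$. For two neighbors $u,v$ of $r$ joined by a rainbow path $urv$, either $uv\in\Ec$ or $\delta(u)+\delta(v)\ge n$. In the first case, $uv$ together with $r$ gives a triangle, and we look instead for a vertex $w$ adjacent to both $u$ and $r$ in suitable free colors. In the second case the degrees are large, and pigeonhole gives many common neighbors of $u$ and $v$ outside $r$.

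\emph{Step 3: the K\H{o}nig argument.} Here is the core of the proof. Fix two neighbors $u,v$ of $r$ (colors $\alpha,\beta$ on $ru,rv$), and build a bipartite graph $B$ on the available colors on one side and the candidate vertices $w\ne r,u,v$ on the other. For a pair $(c,w)$ we record that $uw\in E(G_c)$ or $vw\in E(G_c)$. A rainbow $C_4$ $r\,u\,w\,v\,r$ amounts to choosing $w$ and two distinct colors $c\ne c'$ outside $\{\alpha,\beta\}$ with $uw\in G_c$ and $wv\in G_{c'}$. This is a matching-type condition: we need a vertex $w$ whose $u$-color set and $v$-color set admit a system of distinct representatives of size $2$.

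If no such $w$ exists, then for every $w$ the union $\{c: uw\in G_c\}\cup\{c: vw\in G_c\}$ has at most one color, or the two sets coincide in a single color. By K\H{o}nig's theorem this small matching number gives a small vertex cover of the incidence graph. Summing degrees $\sum_c (d_c(u)+d_c(v))$ over the $n-2$ free colors then gives an upper bound of about $(n-2)$ plus the sizes of the cover classes. This has to be compared with the lower bound $(n-2)(\delta(u)+\delta(v))\ge (n-2)n$ available in the high-degree case.

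\emph{Step 4: handling the low-degree case and choosing the pair.} In the case $uv\in\Ec$ we replace the pair $(u,v)$ by $(u,w)$ for a suitable further neighbor and iterate; $2$-connectivity ensures that $r$ is not a cut vertex, so neighbors of $r$ are linked off $r$. Among all choices of two neighbors of $r$ we pick one maximizing $\delta(u)+\delta(v)$, in the spirit of the maximal-root argument of Theorem~\ref{thm:rooted-rigidity}.

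The main obstacle is Step 3: setting up the incidence graph so that the absence of a rooted $C_4$ translates exactly into a small vertex cover. The second difficulty is the diagonal-like degenerate case where all $u$--$w$ and $w$--$v$ edges appear in only one common color. I would first try to resolve that case with \eqref{eq:LO} applied to pairs $(u,w)$ at rainbow distance $2$ through $r$.
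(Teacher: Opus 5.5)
There is a genuine gap, located exactly at the step you identify as the core. In Step 3 you join colors to candidate vertices $w$ for a \emph{fixed} pair $u,v$. The absence of a rainbow cycle $ruwvr$ is a condition on each single $w$, not a matching condition across different $w$'s, and you also misstate it. Among the colors outside $\{\alpha,\beta\}$, it says that $\{c: uw\in E(G_c)\}$ or $\{c: vw\in E(G_c)\}$ is empty, or that both equal the same singleton. It does not say that their union has at most one color. A vertex $w$ joined to $u$ in every color and to $v$ in none is allowed. So this graph can have a large matching although no cycle $ruwvr$ exists, and K\H{o}nig's theorem yields no small cover. The degree-sum comparison after it is never actually carried out.

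Step 2 has a related hole. The count $d_c(u)+d_d(v)\ge n$ forces a common neighbor other than $r$ only when $uv\notin E(G_c)\cup E(G_d)$. Only then do both neighborhoods lie in $V\setminus\{u,v\}$ and meet in at least two vertices. If $uv$ lies in all but at most one free color without being common, the count guarantees only one common neighbor, possibly $r$, and your plan gives no way to proceed.

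The missing idea is to apply K\H{o}nig to the incidence graph of $r$ itself. Put the colors on one side and $V\setminus\{r\}$ on the other, with $i\sim v$ iff $rv\in E(G_i)$; every color then has degree at least $2$. Suppose $rx\in E(G_a)$ and $ry\in E(G_b)$ with $a\ne b$. Applying \eqref{eq:LO} to the rainbow path $xry$ and then the count above shows that at most one color of $[n]\setminus\{a,b\}$ misses $xy$. Hence a matching $(a,x),(b,y),(c,z)$ gives the rainbow cycle $rxyzr$, using two distinct leftover colors on $xy$ and $yz$. For $n=5$ this needs a separate check, and $n=4$ is just Theorem~\ref{thm:ham-input}. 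So the matching number is at most $2$. Because every color has degree at least $2$, a cover of size at most two cannot use a color vertex. It therefore consists of two vertices $p,q$, which forces $N_{G_i}(r)=\{p,q\}$ for every $i$.

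This degenerate configuration is where your plan breaks down completely. There is no third neighbor of $r$ to move to in Step 4, and maximizing $\delta(u)+\delta(v)$ over pairs is vacuous. The paper settles it by applying \eqref{eq:LO} to a pair \emph{containing} $r$, which your proposal never does. Since $G_k-r$ is connected, some $v\notin\{r,p,q\}$ is a $G_k$-neighbor of, say, $p$. Giving the common edge $rp$ a color other than $k$ yields $d^c(r,v)=2$, while $rv$ lies in no member. Hence $\delta(v)\ge n-\delta(r)=n-2$, so $vp,vq$ are common and $rpvqr$ is a common, hence rainbow, $C_4$.
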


\begin{proof}
For $n=4$ this is Theorem~\ref{thm:ham-input}. Assume $n\ge5$ and fix $r$.
Form a bipartite graph $B_r$ with left part $[n]$ and right part $V\setminus\{r\}$, joining
$i$ to $v$ when $rv\in E(G_i)$. Since every $G_i$ is $2$-connected, every left vertex has
degree at least two.

Assume that $r$ lies on no rainbow $C_4$.  If
$rx\in E(G_a)$ and $ry\in E(G_b)$ with $a\ne b$ and $x\ne y$, then among the colors in
$[n]\setminus\{a,b\}$ at most one can miss the edge $xy$. Indeed, if distinct colors $c,d$
miss $xy$, then $xry$ is a rainbow two-edge path and~\eqref{eq:LO} gives
$\delta(x)+\delta(y)\ge n$. Hence
$d_c(x)+d_d(y)\ge n$. Since neither $G_c$ nor $G_d$ contains $xy$,
$N_c(x)\cup N_d(y)\subseteq V\setminus\{x,y\}$, so the two neighborhoods have at least two
common vertices. One of them may be $r$, but another vertex $z\ne r$ yields the rainbow cycle
$r x z y r$ in colors $a,c,d,b$, a contradiction.

The matching number of $B_r$ is at most two.  Otherwise choose a matching
$(a,x),(b,y),(c,z)$. If $n\ge6$, put $S=[n]\setminus\{a,b,c\}$. The preceding paragraph says
that each of $xy$ and $yz$ is present in all but at most one color of $S$; since $|S|\ge3$,
we may choose distinct $p,q\in S$ with $xy\in E(G_p)$ and $yz\in E(G_q)$. Then
$rxyzr$ is a rainbow $C_4$. If $n=5$, let $S=\{p,q\}$. Each of the three pairs $xy,yz,zx$ is present in at least one
of the two colors $p,q$. If two of these pairs can be assigned different colors from $S$, then,
since every two edges of the triangle on $x,y,z$ are adjacent, they together with the two
corresponding root edges give a rainbow $C_4$. Otherwise no pair is available in both colors and
all three pairs are available only in the same one of $p,q$; after interchanging $p,q$ if necessary,
all three are therefore missing from $G_q$. Then $x,y,z$ are pairwise nonadjacent in $G_q$, so
each has $G_q$-degree at most two. Yet every pair among $x,y,z$ is joined by a rainbow two-edge
path through $r$, and~\eqref{eq:LO} gives system-degree, hence $G_q$-degree, sum at least five
for each such pair, a contradiction. Thus again a rainbow $C_4$ exists.

By K\H{o}nig's theorem, $B_r$ has a vertex cover of size at most two. Such a cover cannot contain
a left vertex.  A single left vertex misses every edge incident with any other color; two left
vertices miss the edges incident with a third color; and a cover consisting of one left and one
right vertex would force every other left vertex to have degree at most one.  A cover consisting
of a single right vertex is also impossible because every left vertex has degree at least two.
Consequently a minimum cover has exactly two right vertices $p,q\in V\setminus\{r\}$.
Since each color has at least two neighbors of $r$,
\[
 N_{G_i}(r)=\{p,q\}\qquad(i\in[n]),
\]
so $rp,rq$ are common and $\delta(r)=2$.

Fix a color $k$. Because $G_k-r$ is connected, one of $p,q$, say $p$, has a $G_k$-neighbor
$v\notin\{r,p,q\}$. Color the common edge $rp$ with a color different from $k$; then $rpv$ is a
rainbow two-edge path. Since $rv$ belongs to no member,~\eqref{eq:LO} gives
$\delta(v)\ge n-2$. Thus in every $G_j$ the vertex $v$ is adjacent to every vertex except $r$;
in particular $vp,vq$ are common. The common cycle $rpvqr$ can be assigned four distinct
colors, the final contradiction.
\end{proof}

\subsection{The rooted \texorpdfstring{$(n-1)$}{(n-1)}-cycle}\label{sec:rooted-cn1}

\begin{lemma}[Rooted $C_{n-1}$]\label{lem:rooted-cn1}
Let $n\ge6$. Suppose every $G_i$ is $2$-connected and $\GraphSys$ satisfies~\eqref{eq:LO}.
For every $r\in V$, either $r$ lies on a rainbow $C_{n-1}$ or
$G_1=\cdots=G_n=K_{n/2,n/2}$ with a common balanced bipartition.
\end{lemma}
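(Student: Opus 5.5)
We start from a rainbow Hamilton cycle $H=u_0u_1\cdots u_{n-1}u_0$, which exists by Theorem~\ref{thm:ham-input}, and take $u_0=r$. Since $H$ is spanning, all $n$ colors are used, one on each edge. Suppose $r$ lies on no rainbow $C_{n-1}$. We aim to show that $\delta(v)\ge n/2$ for every $v$. Then~\eqref{eq:global-ore} holds, and Theorem~\ref{thm:global-vertex} gives either $[4,n]$-vertex-pancyclicity, which would contradict the assumption, or the balanced complete bipartite system.

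The first step is the shortcut bound. For every $i$ with $u_{i+1}\neq r$, Lemma~\ref{lem:rooted-shortcut} applied with $L=n$ gives $\delta(u_i)+\delta(u_{i+2})\ge n$, since the shortcut deletes only the non-root vertex $u_{i+1}$. The only exempt pair is $\{u_{n-1},u_1\}$.

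The second step obtains upper bounds on consecutive pairs by color transpositions. Deleting an edge $u_iu_{i+1}$ with color $e$ yields a rainbow Hamilton path with exactly one unused color $e$. Because $H$ uses all colors, we cannot apply Lemma~\ref{lem:rooted-path} directly with two unused colors. We free a second color instead. Suppose some other edge $u_ju_{j+1}$, with color $f$, also lies in $G_e$. Swapping its color to $e$ frees $f$ while keeping a rainbow Hamilton cycle. Deleting a vertex $v\neq r$ then gives a path on $n-1$ vertices containing $r$, with two unused colors: the colors of the two edges at $v$. Lemma~\ref{lem:rooted-path} with $k=n-1$ applies whenever no rooted rainbow $C_{n-1}$ exists. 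Combined with the shortcut bound, it forces the neighbors $u_{i-1},u_{i+1}$ of a deleted vertex $u_i\neq r$ to satisfy
\[
 d_{c_1}(u_{i-1})+d_{c_2}(u_{i+1})\le (n-2)+|N_{c_1}(u_{i-1})\cap N_{c_2}(u_{i+1})\cap\{u_i\}|\le n-1 .
\]
Here $c_1,c_2$ are the colors of $u_{i-1}u_i$ and $u_iu_{i+1}$, and the outside set is just $\{u_i\}$. Together with the lower bound $\ge n$ from the shortcut, this forces equality in the inside crossing inequality, and also that $u_i$ is adjacent to $u_{i-1}$ in $c_1$ and to $u_{i+1}$ in $c_2$, which it is. The equality clause of Lemma~\ref{lem:rooted-path} then gives, for every $j$, exactly one of the two crossing chords in the prescribed colors. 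This alternating structure is the rigidity we exploit, and it is the "equality case of endpoint crossing" mentioned in the introduction.

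The third step propagates the structure. Applying the equality clause at two overlapping positions $u_i$ and $u_{i+1}$ (both different from $r$), and using the transposition freedom to vary which colors sit on the endpoint edges, we expect to show that the chord pattern is independent of the colors. Moreover, $u_{i-1}$ and $u_{i+2}$ should have complementary neighborhoods along $H$, giving $\delta(u_{i-1})+\delta(u_{i+1})=n$ with consecutive parities alternating. This yields $\delta(u_j)=n/2$ along all indices except near $r$. The vertices $u_1$, $u_{n-1}$ and $r$ are then handled by the adjacent equalities coming from positions $i=2$ and $i=n-2$. These are available since $n\ge6$, so these positions stay away from $r$.

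The main obstacle is the second step: producing two unused colors at the right endpoints. If $u_{i-1}u_i$ is not in any other layer, the transposition trick fails, and the degree sum must then be bounded directly through~\eqref{eq:LO}. Failing that, we would use the $2$-connectivity of each $G_i$ to find an alternative rerouting of $H$ through a chord, reducing to a rainbow Hamilton cycle in which the relevant color does occur on two edges' layers.
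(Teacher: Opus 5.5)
Your proposal has a genuine gap: the rigidity step (your third step) is only announced (``we expect to show''), and the idea that actually drives the argument is missing.

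In your second step you apply Lemma~\ref{lem:rooted-path} to $P=H-u_i$ with $c_1$ at $u_{i-1}$ and $c_2$ at $u_{i+1}$. With this assignment the outside part carries no information, since $u_{i-1}u_i\in E(G_{c_1})$ and $u_iu_{i+1}\in E(G_{c_2})$ are edges of $H$. What you do get is equality in the inside bound. But the equality clause (``exactly one of two chords'') is useless until you know which chords are absent.

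Your display is also wrong. The outside contribution is $d_{c_1}(u_{i-1},\{u_i\})+d_{c_2}(u_{i+1},\{u_i\})\le2$, not the size of an intersection. As written, your bound $n-1$ contradicts the shortcut bound outright. Also, no color needs to be ``freed'': deleting a non-root vertex already releases two.

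The missing idea is to \emph{swap} the endpoint colors, putting $c_2$ at $u_{i-1}$ and $c_1$ at $u_{i+1}$. The inside sum is still at most $n-2$ and $\delta(u_{i-1})+\delta(u_{i+1})\ge n$. So the single outside vertex $u_i$ must contribute $2$, i.e.\ $u_{i-1}u_i\in E(G_{c_2})$ and $u_iu_{i+1}\in E(G_{c_1})$. Hence the two colors at any non-root vertex can be exchanged. This is exactly the transposition you feared might fail in your last paragraph; it is forced by the absence of a rooted $C_{n-1}$.

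The argument repeats after each exchange. The adjacent position-transpositions obtained by cutting $H$ at $r$ generate the full symmetric group, so every edge of $H$ is common. The rest of the paper's proof rests on this free recoloring:
\begin{itemize}
\item Safe distance-two chords are then absent from every layer. This is what lets the equality clause (your inside equality) force the distance-three chords.
\item A switch on $u_{i-2}\cdots u_{i+2}$ makes those chords common. So each $V\setminus\{u_{j+1},u_{j+2}\}$ carries a common $C_{n-2}$ colorable avoiding any two prescribed colors.
\item Inserting a vertex into such a cycle gives $d_a(v)+d_b(v)\ge n$, and then the alternation $A_j+B_{j+1}=B_j+A_{j+1}=1$.
\item This forces $n$ even and $G_a=G_b=K_{X,Y}$ directly, without Theorem~\ref{thm:global-vertex}.
\end{itemize}

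Even with your second step repaired, what it yields is $d_{c_1}(u_{i-1})+d_{c_2}(u_{i+1})=n$, hence $\delta(u_{i-1})+\delta(u_{i+1})=n$ for every non-root $u_i$. These relations only make $\delta$ alternate between two complementary values along the chains $u_j,u_{j+2},u_{j+4},\ldots$. The chain containing $u_1$ and $u_{n-1}$ is not even closed, since the middle vertex $r$ is exempt. So these relations do not give $\delta(u_j)=n/2$.

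The consecutive upper bounds $\delta(u_i)+\delta(u_{i+1})\le n$ that you set out to prove are never derived. The fallback via $2$-connectivity and ``rerouting'' is not an argument. Hence the reduction to Theorem~\ref{thm:global-vertex}, which would be legitimate once $\delta\ge n/2$ is known everywhere, is never reached.
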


\begin{proof}
Fix $r=x_1$ and, by Theorem~\ref{thm:ham-input}, a rainbow Hamilton cycle
\[
 C=x_1x_2\cdots x_nx_1,
\]
with $x_ix_{i+1}$ assigned color $i$ (indices modulo $n$). Assume that no rainbow
$C_{n-1}$ containing $x_1$ exists.

\smallskip\noindent
\emph{Step 1: the Hamilton edges are common.}
Fix $i\in\{2,\ldots,n\}$ and delete the non-root vertex $x_i$ from $C$. The remaining graph is a
rainbow path $P_i$ on $n-1$ vertices, with endpoints $x_{i-1},x_{i+1}$ and missing colors
$i-1,i$; throughout this proof, subscripts are read modulo $n$. The two-edge path
$x_{i-1}x_ix_{i+1}$ is rainbow. If
$\delta(x_{i-1})+\delta(x_{i+1})<n$, condition~\eqref{eq:LO} makes the chord
$x_{i-1}x_{i+1}$ common, and coloring it with either released color closes $P_i$ to the forbidden
rooted $C_{n-1}$. Hence
\[
 \delta(x_{i-1})+\delta(x_{i+1})\ge n.
\]
Apply Lemma~\ref{lem:rooted-path} to $P_i$ with color $i$ at the endpoint $x_{i-1}$ and color
$i-1$ at the endpoint $x_{i+1}$. Since the only vertex outside $P_i$ is $x_i$, the outside
neighborhood bound gives
\[
 d_i(x_{i-1},\{x_i\})+d_{i-1}(x_{i+1},\{x_i\})\ge2.
\]
Both summands are at most one, so
\begin{equation}\label{eq:adjacent-swap}
 x_{i-1}x_i\in E(G_i),\qquad x_ix_{i+1}\in E(G_{i-1}).
\end{equation}
Thus the two colors on the Hamilton edges incident with any non-root vertex may be transposed
without changing the underlying cycle.

Iterate these transpositions, starting from an arbitrary rainbow coloring of the same Hamilton cycle
containing $x_1$.  The preceding argument applies after each transposition, so the two colors on the
Hamilton edges incident with any non-root vertex may again be exchanged. Cutting the
cyclic list of edge positions at the root leaves exactly the adjacent transpositions
$(1,2),(2,3),\ldots,(n-1,n)$; these generate the full symmetric group on the $n$ colors.
Consequently every fixed Hamilton edge can receive every color in some rainbow coloring of the
same rooted Hamilton cycle. Hence every Hamilton edge belongs to every member, that is,
\begin{equation}\label{eq:ham-edges-common}
 x_jx_{j+1}\in E_\cap\qquad(j\in\mathbb Z_n).
\end{equation}
The same argument applies to every rainbow Hamilton cycle containing $x_1$.

\smallskip\noindent
\emph{Step 2: safe distance-two chords are absent.}
If the middle vertex $x_{i+1}$ is not $x_1$, then
\begin{equation}\label{eq:safe-d2-absent}
 x_ix_{i+2}\notin E(G_j)\qquad(j\in[n]).
\end{equation}
Indeed, otherwise delete the two Hamilton edges incident with $x_{i+1}$ and use the chord in
color $j$.  If $j$ is one of the two released colors, this already gives the required rainbow
coloring.  Otherwise the unique Hamilton edge currently carrying color $j$ is common by
\eqref{eq:ham-edges-common}, so recolor it with one of the two released colors.  In either case
deleting $x_{i+1}$ gives a rainbow $C_{n-1}$ containing $x_1$.

\smallskip\noindent
\emph{Step 3: distance-three chords are common.}
Fix $i\in\{2,\ldots,n\}$ and delete $x_i$ from $C$. Orient the remaining path from
$x_{i-1}$ to $x_{i+1}$ in the direction that first visits $x_{i-2}$. It has $n-1$ vertices,
contains $x_1$, and misses colors $i-1,i$. Apply Lemma~\ref{lem:rooted-path} with endpoint
colors $i-1$ at $x_{i-1}$ and $i$ at $x_{i+1}$. The inside-neighborhood sum is at most $n-2$.
The outside vertex $x_i$ contributes one neighbor in each of these two colors.  The total-degree
lower bound is exactly the one established in Step~1 for the same endpoint pair,
$\delta(x_{i-1})+\delta(x_{i+1})\ge n$.  Hence equality holds throughout in the rooted
endpoint-crossing lemma. Looking at the second position from the first endpoint gives
\[
 x_{i-2}x_{i+1}\in E(G_i)\qquad(i\ne3),
\]
because the competing edge $x_{i-3}x_{i-1}$ is a safe distance-two chord unless its middle
vertex is the root. Looking symmetrically from the other endpoint gives
\[
 x_{i-1}x_{i+2}\in E(G_{i-1})\qquad(i\ne n-1).
\]
Thus, for $i\in\{2,\ldots,n\}\setminus\{3,n-1\}$, both chords are available. Replace the segment
$x_{i-2}x_{i-1}x_ix_{i+1}x_{i+2}$ by
$x_{i-2}x_{i+1}x_ix_{i-1}x_{i+2}$.  Use colors $i$ and $i-1$ on the two new chords, respectively,
and recolor the two retained middle edges $x_{i+1}x_i$ and $x_ix_{i-1}$ with the released colors
$i+1$ and $i-2$.  These middle edges are common by Step~1, and the four colors involved are distinct
for $n\ge6$.  Thus this is another rainbow Hamilton cycle containing $x_1$, so Step~1 makes the
two new chords common. Every cyclic distance-three
chord $x_jx_{j+3}$ occurs in one of these switches: reading indices modulo $n$, it is the first
new chord for $i=j+2$ or the second for $i=j+1$.  These two candidate indices are consecutive
modulo $n$.  For $n\ge6$ they cannot both lie in the forbidden set $\{1,3,n-1\}$, and at least
one has a representative in $\{2,\ldots,n\}\setminus\{3,n-1\}$.  In the smallest case
$n=6$, the admissible switch indices $i=2,4,6$ already cover all six cyclic distance-three chords.
Hence
\begin{equation}\label{eq:d3-common}
 x_jx_{j+3}\in E_\cap\qquad(j\in\mathbb Z_n).
\end{equation}
Consequently, for any two distinct colors $a,b$ and any $j$, the vertex set
$V\setminus\{x_{j+1},x_{j+2}\}$ supports a rainbow $C_{n-2}$ avoiding colors $a,b$: use the
common chord $x_jx_{j+3}$ and the remaining common Hamilton edges, and color them with
$[n]\setminus\{a,b\}$.

\smallskip\noindent
\emph{Step 4: a same-vertex two-color degree bound.}
We prove
\begin{equation}\label{eq:same-vertex-sum}
 d_a(v)+d_b(v)\ge n
 \qquad(v\in V,\ a\ne b).
\end{equation}
Otherwise choose a cyclic distance-two vertex $w$ of $v$ so that the middle Hamilton vertex is
not $x_1$. By~\eqref{eq:safe-d2-absent} and~\eqref{eq:LO},
$\delta(v)+\delta(w)\ge n$. Therefore
\[
 [d_a(v)+d_b(v)]+[d_a(w)+d_b(w)]\ge2n,
\]
so $d_a(w)+d_b(w)>n$. Choose a Hamilton neighbor $z$ of $w$ with $z\ne x_1$, and use the common
$C_{n-2}=y_1\cdots y_{n-2}y_1$ from Step~3 on $V\setminus\{w,z\}$, colored without $a,b$.
The total number of $a$- and $b$-neighbors of $w$ on this cycle is greater than $n-2$.  Hence,
with indices modulo $n-2$, the two subsets
$\{j:wy_j\in E(G_a)\}$ and $\{j:wy_{j+1}\in E(G_b)\}$ of $[n-2]$ intersect.
Inserting $w$ at the corresponding edge produces a rainbow
$C_{n-1}$ whose omitted vertex is $z\ne x_1$, a contradiction.

\smallskip\noindent
\emph{Step 5: alternation and the extremal structure.}
Fix $v\in V$ and distinct colors $a,b$. Choose a Hamilton neighbor $z$ of $v$ with $z\ne x_1$,
and let
\[
 D=y_1y_2\cdots y_{n-2}y_1
\]
be the common $C_{n-2}$ on $V\setminus\{v,z\}$, colored without $a,b$. Put
\[
 A_j=\mathbf 1_{\{vy_j\in E(G_a)\}},\qquad
 B_j=\mathbf 1_{\{vy_j\in E(G_b)\}}.
\]
Since inserting $v$ in an edge of $D$ would give the forbidden rooted $C_{n-1}$,
\[
 A_j+B_{j+1}\le1,\qquad B_j+A_{j+1}\le1
\]
for every cyclic index $j$. The edge $vz$ is common, and~\eqref{eq:same-vertex-sum} gives
\[
 \sum_jA_j+\sum_jB_j=d_a(v)+d_b(v)-2\ge n-2.
\]
Summing either family of inequalities gives the reverse bound, so equality holds everywhere:
\[
 A_j+B_{j+1}=B_j+A_{j+1}=1,
 \qquad A_{j+2}=A_j.
\]
If $n$ were odd, the cycle $D$ would have odd length and $A_j$ would be constant; one of
$G_a,G_b$ would then give $v$ degree one, impossible in a $2$-connected graph. Hence $n$ is
even. When $n$ is even, the constant patterns are excluded for the same reason, so $A_j$ and
$B_j$ alternate and in fact $A_j=B_j$. Thus
\[
 N_{G_a}(v)=N_{G_b}(v),\qquad d_a(v)=d_b(v)=n/2.
\]
As $a,b$ and $v$ were arbitrary, all members coincide with an $n/2$-regular graph $G$.
Finally take $v=x_i$ and choose a Hamilton neighbor $z$ of $v$ which is not the root; such a choice
is always possible because at most one Hamilton neighbor of $v$ is $x_1$.  Traverse $C$ from $v$
towards $z$.  In this orientation the common $C_{n-2}$ on $V\setminus\{v,z\}$ begins with the
vertex at cyclic distance two from $v$, and that distance-two edge is absent by
~\eqref{eq:safe-d2-absent}.  The alternating pattern on the $C_{n-2}$ therefore shows that $v$ is
adjacent precisely to the vertices at odd cyclic distance from $v$.  Since $n$ is even, the parity
of cyclic distance is independent of the chosen orientation. Hence, with
\[
 X=\{x_1,x_3,\ldots,x_{n-1}\},\qquad
 Y=\{x_2,x_4,\ldots,x_n\},
\]
we have $G=K_{X,Y}$. This proves the lemma.
\end{proof}

\section{Proofs of the main theorems and sharpness}\label{sec:main-proof}

\subsection{Completion of the proof}\label{sec:completion}

\begin{lemma}[Boundary lengths]\label{lem:boundary}
Suppose that $n\ge4$, that $\GraphSys$ satisfies \eqref{eq:LO}, and
that it contains a rainbow Hamilton cycle. If it has no rainbow
triangle or no rainbow $C_{n-1}$, then
\[
 \sum_{v\in V}\delta(v)\ge n^2/2.
\]
\end{lemma}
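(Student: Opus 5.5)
Fix a rainbow Hamilton cycle $C=x_1x_2\cdots x_nx_1$, with $x_ix_{i+1}$ assigned color $i$ (indices modulo $n$). The plan is to turn the missing boundary length into the pairwise degree bound
\[
 \delta(x_i)+\delta(x_{i+2})\ge n\qquad(i\in\mathbb Z_n),
\]
or a close substitute, and then sum it around $C$.

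\emph{Case 1: no rainbow $C_{n-1}$.} Apply Lemma~\ref{lem:shortcut} to $C$ itself (here $L=n\ge4$). It gives the displayed inequality for every $i$. Summing over $i\in\mathbb Z_n$ counts each vertex exactly twice, so $2\sum_v\delta(v)\ge n^2$, which is the claim.

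\emph{Case 2: no rainbow triangle.} For every $i$, the two-edge path $x_ix_{i+1}x_{i+2}$ is rainbow, and any color $c\notin\{i,i+1\}$ placed on a chord $x_ix_{i+2}$ would close a rainbow triangle. Hence the chord is absent from every $G_c$ with $c\ne i,i+1$; in particular it is not common, so \eqref{eq:LO} yields $\delta(x_i)+\delta(x_{i+2})\ge n$. The same summation argument then finishes. For $n=4$ I would check directly that the chord argument still excludes the edges in at least one color, which is all that is needed, since $n\ge4$ leaves a third color available.

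I expect no real obstacle. The only point to verify is that in Case~2 a single missing color already makes the chord non-common; this holds because $n\ge3$ gives a third color. If instead the intended route is the path version, one would use the endpoint crossing Lemma~\ref{lem:path}, but the shortcut argument suffices.
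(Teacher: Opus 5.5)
Your proposal is correct and follows the paper's proof. Case~1 is exactly Lemma~\ref{lem:shortcut} applied to the Hamilton cycle. Case~2 is the contrapositive of the paper's argument that a common chord $x_ix_{i+2}$ would close a rainbow triangle using a third color, and both cases finish by summing around $C$.
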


\begin{proof}
Fix a rainbow Hamilton cycle $C=v_1\cdots v_nv_1$.
If $C_{n-1}$ is absent, Lemma~\ref{lem:shortcut} gives
$\delta(v_i)+\delta(v_{i+2})\ge n$ for every $i$.
If no rainbow triangle exists, then the same inequalities hold. Indeed, if
$\delta(v_i)+\delta(v_{i+2})<n$, then~\eqref{eq:LO} gives
$v_iv_{i+2}\in E_{\cap}$, and the triangle
$v_iv_{i+1}v_{i+2}v_i$ can be colored with the two colors used on the Hamilton cycle
and any third color. In either case, summing
around $C$ gives
\[
 2\sum_{v\in V}\delta(v)
 =\sum_{i=1}^n\bigl(\delta(v_i)+\delta(v_{i+2})\bigr)\ge n^2.
\]
\end{proof}

\begin{proof}[Proof of Theorem~\ref{thm:main}]
Assume first that $\GraphSys$ is not the common balanced complete bipartite system. Fix an
arbitrary root $r\in V$. Theorem~\ref{thm:ham-input} gives an $r$-rooted rainbow $C_n$, while
Lemma~\ref{lem:rooted-c4} gives an $r$-rooted rainbow $C_4$.

If $n=4$, this already gives the vertex-specific conclusion. If $n=5$, the Hamilton cycle supplies
$C_5$. If $n=6$, Lemma~\ref{lem:rooted-cn1} supplies an $r$-rooted $C_5$, and the Hamilton cycle
supplies $C_6$.  For $n=7$, Theorem~\ref{thm:rooted-rigidity} is used only at $m=4$, producing
an $r$-rooted $C_5$; Lemma~\ref{lem:rooted-cn1} and the Hamilton cycle then supply lengths $6$
and $7$.  Suppose $n\ge8$. Starting from the rooted $C_4$, apply
Theorem~\ref{thm:rooted-rigidity} successively: because the exceptional outcome is excluded, an
$r$-rooted $C_m$ with $4\le m\le n-3$ forces an $r$-rooted $C_{m+1}$. Hence $r$ lies on rainbow
cycles of every length from $4$ through $n-2$. Lemma~\ref{lem:rooted-cn1} gives length $n-1$,
and Theorem~\ref{thm:ham-input} gives length $n$. Since $r$ was arbitrary, $\GraphSys$ is
$[4,n]$-rainbow vertex-pancyclic.

It remains only to obtain a rainbow triangle. If none existed, Lemma~\ref{lem:boundary} would give
$\sum_v\delta(v)\ge n^2/2$; Theorem~\ref{thm:bondy-input} would then imply that $\GraphSys$ is
rainbow pancyclic or is the exceptional balanced bipartite system. The latter is excluded, and the
former contains a rainbow triangle, a contradiction. Thus $\GraphSys$ is rainbow pancyclic as
well. This proves the first outcome of the theorem; the remaining possibility is exactly the
second outcome.
\end{proof}

\subsection{The stronger local cutoff and rooted triangles}\label{sec:local-43-proof}

\begin{proof}[Proof of Theorem~\ref{thm:local-43}]
For $n=3$, every $2$-connected member is $K_3$, so the conclusion is immediate. Assume henceforth
that $n\ge4$ and write
\[
 \tau=\frac{4n}{3}-1.
\]
Since $\tau>n$, condition~\eqref{eq:LO-43} implies~\eqref{eq:LO}. Hence
Theorem~\ref{thm:main} applies. Its exceptional balanced complete bipartite outcome cannot occur:
if $u,v$ belong to the same part, then $d^c(u,v)=2$, $\delta(u)+\delta(v)=n<\tau$, but
$uv\notin\Ec$. Consequently, every vertex already lies on a rainbow $C_\ell$ for each
$4\le\ell\le n$. It remains to force a rainbow triangle through an arbitrary prescribed vertex
$x$.

Assume that no rainbow triangle contains $x$.  We first show that every edge incident with $x$
which occurs in one member is common.  Suppose
$xy\in E(G_i)$ but $xy\notin\Ec$. Since $d^c(x,y)=1$, the contrapositive of
\eqref{eq:LO-43} gives
\[
 \delta(x)+\delta(y)\ge\tau>n.
\]
Since $n\ge4$, choose distinct colors $j,k\in[n]\setminus\{i\}$. Then
\[
 d_j(x)+d_k(y)\ge\delta(x)+\delta(y)\ge\tau>n,
\]
so $N_j(x)\cap N_k(y)\ne\varnothing$. Any vertex
$z$ in this intersection is distinct from $x,y$, and $xyzx$ is a rainbow triangle in the three
pairwise distinct colors $i,k,j$, a contradiction. Thus all edges incident with $x$ are common.

It follows that the neighborhood of $x$ is the same in every member; write
\[
 N=N_{G_i}(x)\quad(i\in[n]),\qquad d=|N|=\delta(x).
\]
Moreover, $N$ is independent in every $G_i$. Otherwise, if $u,v\in N$ and
$uv\in E(G_k)$, then the two common edges $xu,xv$ can be assigned two distinct colors different
from $k$, producing a rainbow triangle through $x$.

Since every $G_i$ is $2$-connected, $d\ge2$. For distinct $u,v\in N$, the path $uxv$ can be
colored rainbow, while $uv\notin\Ec$. Hence the contrapositive of~\eqref{eq:LO-43} gives
\[
 \delta(u)+\delta(v)\ge\tau.
\]
Because $N$ is independent in every member, a vertex of $N$ can be adjacent only to $x$ and
to the $n-d-1$ vertices of $V\setminus(N\cup\{x\})$. Hence each of $u,v$ has degree at most
$n-d$ in every member. Therefore
\[
 \tau\le2(n-d),
 \qquad\text{so}\qquad
 d\le \frac n3+\frac12.
 \tag{\(*\)}\label{eq:root-degree-upper-43}
\]
For $n=4$, this contradicts $d\ge2$, so assume $n\ge5$. Then
$S=V\setminus(N\cup\{x\})$ is nonempty.

Fix a color $i$. Since $G_i$ is $2$-connected, $G_i-x$ is connected. The sets $N$ and $S$
are both nonempty and partition $V\setminus\{x\}$, so some edge $uy\in E(G_i)$ crosses from
$u\in N$ to $y\in S$. The edge $xu$ is common, so assigning it a color different from $i$
makes $xuy$ a rainbow $2$-path. Since every edge incident with $x$ that occurs in one member is
common, $y\notin N$ implies that $xy$ belongs to no member. The contrapositive of
\eqref{eq:LO-43} therefore gives
\[
 d+\delta(y)\ge\tau.
\]
Using~\eqref{eq:root-degree-upper-43},
\[
 \delta(y)\ge \tau-d
 \ge \left(\frac{4n}{3}-1\right)-\left(\frac n3+\frac12\right)
 =n-\frac32.
\]
As $\delta(y)$ is an integer, $\delta(y)\ge n-1$. Hence $y$ is adjacent to $x$ in every
$G_j$, contradicting $y\notin N$. This contradiction proves that every prescribed vertex $x$
lies on a rainbow triangle. Together with the already established rooted cycle lengths
$4,\ldots,n$, the system is rainbow vertex-pancyclic.
\end{proof}

\begin{remark}
Only the bound $d\le n/3+1/2$ in~\eqref{eq:root-degree-upper-43} is needed in the last step.
Thus the triangle argument is genuinely local: after Theorem~\ref{thm:main} supplies the rooted
cycle lengths $4,\ldots,n$, no global Ore degree-sum hypothesis is used.
\end{remark}

\begin{remark}
The unrooted Theorem~\ref{thm:rigidity} still gives an alternative proof of the pancyclic part of
Theorem~\ref{thm:main}. The rooted argument is genuinely stronger only from length $4$ upward:
the same local hypothesis does not force every vertex to lie on a rainbow triangle, as shown below.
\end{remark}

\subsection{Sharpness and examples}\label{sec:examples}

The exceptional system in Theorem~\ref{thm:main} is rainbow bipancyclic.
If all members equal $K_{t,t}$ with $t\ge2$, then every vertex has
system degree $t=n/2$, so \eqref{eq:LO} holds. There are no odd
cycles. For every $2\le j\le t$, choose $j$ vertices from each part
and alternate between the parts; assigning any $2j$ distinct colors
gives a rainbow $C_{2j}$. Thus the exception is rainbow bipancyclic.

\begin{proposition}[The lower endpoint $4$ is best possible]\label{prop:vertex-sharp}
For every even $n=2t\ge6$, there is a $2$-connected diagonal system satisfying~\eqref{eq:LO}
which is not the exceptional balanced complete bipartite system and is not rainbow
vertex-pancyclic.
\end{proposition}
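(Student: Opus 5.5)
We would take the balanced complete bipartite graph and add a single edge inside one part. Write $n=2t$ with $t\ge3$, and fix disjoint sets $X=\{x_1,\ldots,x_t\}$ and $Y=\{y_1,\ldots,y_t\}$ with $V=X\sqcupdot Y$. Let
\[
 G=K_{X,Y}+x_1x_2,
\]
and take the diagonal system $G_1=\cdots=G_n=G$. We then verify four properties in turn.

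\emph{$2$-connectivity.} The graph $G$ contains the spanning subgraph $K_{t,t}$ with $t\ge2$, which is $2$-connected. Adding an edge preserves $2$-connectivity.

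\emph{Condition~\eqref{eq:LO}.} In a diagonal system, rainbow distance agrees with ordinary distance whenever that distance is at most $2$, since distinct colors can be assigned freely. Every edge of $G$ is common, so~\eqref{eq:LO} reduces to the condition
\[
 d_G(u)+d_G(v)\ge n\quad\text{whenever } d_G(u,v)=2.
\]
The degrees are $d_G(x_1)=d_G(x_2)=t+1$ and $d_G(v)=t$ for every other vertex. Every pair $u\in X$, $v\in Y$ is adjacent. The remaining nonadjacent pairs lie in the same part, and each such pair has degree sum at least $2t=n$. Hence~\eqref{eq:LO} holds.

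\emph{Not the exception.} The graph $G$ contains the triangle $x_1x_2y_1$, so it is not bipartite. In particular it is not the exceptional system $K_{n/2,n/2}$.

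\emph{A vertex on no rainbow triangle.} This is where $t\ge3$, that is $n\ge6$, is used: it guarantees that the vertex $x_3$ exists. Its neighborhood is $N_G(x_3)=Y$, and $Y$ is independent in $G$. Therefore $x_3$ lies on no triangle of $G$, and a fortiori on no rainbow $C_3$ of the system. So the system is not rainbow vertex-pancyclic.

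There is no real obstacle in this argument. The only point needing care is the observation that, in a diagonal system, the rainbow-distance hypothesis coincides with ordinary distance two, which makes the check of~\eqref{eq:LO} immediate.
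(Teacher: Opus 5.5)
Your proposal is correct and is the paper's own construction: $K_{X,Y}$ plus one edge inside a part, taken as a diagonal system. The paper puts the extra edge in $Y$ and uses a vertex $y_0\in Y\setminus\{y_1,y_2\}$, whose neighbourhood $X$ is independent, which is the same as your choice of $x_3$ after relabelling; the verifications of $2$-connectivity, of \eqref{eq:LO} via minimum degree $t=n/2$, and of the non-exceptional structure match.
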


\begin{proof}
Let $V=X\sqcupdot Y$ with $|X|=|Y|=t$, choose distinct $y_1,y_2\in Y$, and let
\[
 G=K_{X,Y}+y_1y_2,
 \qquad G_1=\cdots=G_n=G.
\]
The graph $G$ is $2$-connected. Every vertex has degree at least $t=n/2$, so the antecedent
$\delta(u)+\delta(v)<n$ in~\eqref{eq:LO} never occurs. The system is not the exceptional one
because of the extra edge $y_1y_2$. However, for any
$y_0\in Y\setminus\{y_1,y_2\}$ we have $N_G(y_0)=X$, and $X$ is independent. Thus no triangle
contains $y_0$, so $y_0$ lies on no rainbow triangle in the diagonal system.
\end{proof}

For every order $n\equiv1\pmod3$, the stronger local cutoff is sharp at the integer level.
We record the diagonal construction underlying the corresponding example of Li, Wang and
Yan~\cite{LiWangYan}, because it also verifies the local condition directly.

\begin{proposition}[Sharpness of the local $4n/3$ cutoff]\label{prop:local-43-sharp}
For every $n\equiv1\pmod3$ with $n\ge4$, there is a $2$-connected diagonal system which
satisfies the local condition obtained from \eqref{eq:LO-43} by replacing the cutoff
$4n/3-1$ with $(4n-4)/3$, but which is not rainbow vertex-pancyclic.
\end{proposition}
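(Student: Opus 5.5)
The plan is to build the extremal graph directly from the triangle argument in the proof of Theorem~\ref{thm:local-43}. That argument shows that a vertex $x$ on no triangle has an independent common neighborhood $N$ with $|N|=d\le n/3+1/2$, and that each vertex of $N$ has degree at most $n-d$. We make every inequality there tight. Write $n=3k+1$ with $k\ge1$, so $(4n-4)/3=4k$. Take $V=\{x\}\sqcupdot N\sqcupdot S$ with $|N|=k+1$ and $|S|=2k-1$; note that $S\neq\varnothing$ since $k\ge1$. Let $G$ be the graph in which
\begin{enumerate}
\item $x$ is joined to every vertex of $N$;
\item $N$ is independent;
\item every vertex of $N$ is joined to every vertex of $S$;
\item $S$ is a clique.
\end{enumerate}
Put $G_1=\cdots=G_n=G$.

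\emph{Degrees.} We have $\deg x=k+1$, each $u\in N$ has degree $1+(2k-1)=2k$, and each $s\in S$ has degree $(k+1)+(2k-2)=3k-1$. In a diagonal system $\delta(v)=d_G(v)$ and $\Ec=E(G)$. Since $n\ge4$ colors are available, every path of length two can be colored rainbow, so $d^c(u,v)=d_G(u,v)$.

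\emph{The modified local condition.} We must check that every nonadjacent pair $u,v$ with $d_G(u,v)\le2$ satisfies $\delta(u)+\delta(v)\ge 4k$, so that the antecedent ``$<(4n-4)/3$'' never fires. The only nonadjacent pairs are of two kinds.
\begin{enumerate}
\item Two vertices of $N$: they are at distance two through $x$, and their degree sum is $2k+2k=4k$.
\item The vertex $x$ and a vertex $s\in S$: they are at distance two through $N$, and their degree sum is $(k+1)+(3k-1)=4k$.
\end{enumerate}
Both sums equal the cutoff exactly, so the condition holds. This equality is precisely the integer-level sharpness, since the cutoff $4n/3-1$ of \eqref{eq:LO-43} lies strictly between $4k-1$ and $4k$.

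\emph{$2$-connectedness.} We check that $G-w$ is connected for each vertex $w$.
\begin{enumerate}
\item Deleting $x$ leaves $N\cup S$, which is connected through the complete join between $N$ and $S$.
\item Deleting a vertex of $S$ leaves $x$ joined to all of $N$. If $S$ still has a vertex, it is joined to all of $N$; if $k=1$, then $G$ is the $4$-cycle and $G-w$ is a path.
\item Deleting a vertex of $N$ leaves at least one vertex of $N$, because $|N|=k+1\ge2$. That vertex is adjacent to $x$ and to all of $S$.
\end{enumerate}

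\emph{Failure of vertex-pancyclicity.} The vertex $x$ lies on no triangle of $G$, because $N_G(x)=N$ is independent. Hence $x$ lies on no rainbow triangle, and the system is not rainbow vertex-pancyclic.

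The only step needing care is the small case $n=4$, where the graph degenerates to a $4$-cycle; it is still $2$-connected, so no separate argument is required.
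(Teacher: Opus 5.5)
Your graph is exactly the paper's $\overline{K_t}\vee(K_1\cup K_m)$ with $t=k+1$ and $m=2k-1$. Your verification is the same as the paper's proof: the degrees $k+1,2k,3k-1$, the two types of distance-two nonedges with degree sum exactly $4k=(4n-4)/3$, the vertex-deletion check for $2$-connectivity, and the independent neighborhood of $x$.

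There is one arithmetic slip, confined to a side remark. $4n/3-1=4k+\tfrac13$ lies strictly between $4k$ and $4k+1$, not between $4k-1$ and $4k$. That is exactly why a degree sum of $4k$ still triggers \eqref{eq:LO-43} (consistent with Theorem~\ref{thm:local-43}) but does not trigger the weakened condition. The proof itself is unaffected.
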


\begin{proof}
Set
\[
 t=\frac{n+2}{3},\qquad m=\frac{2n-5}{3},
\]
and let
\[
 G=\overline{K_t}\vee(K_1\cup K_m).
\]
Write $Y$ for the independent set of order $t$, let $x$ be the vertex of the $K_1$, and let
$Z$ induce the clique $K_m$.  Thus every vertex of $Y$ is adjacent to $x$ and to every vertex of
$Z$, while there are no edges from $x$ to $Z$.  Take $G_1=\cdots=G_n=G$.

Since $t\ge2$ and $m\ge1$, deleting any one vertex leaves a connected graph.  If $x$ is
deleted, every vertex of $Y$ is adjacent to every vertex of $Z$; if a vertex of $Y$ is deleted,
another vertex of $Y$ remains and links $x$ to $Z$; and if a vertex of $Z$ is deleted, $x$ remains
adjacent to all of $Y$, while every surviving vertex of $Z$ is adjacent to $Y$.  Hence $G$ is
$2$-connected.  The degrees are
\[
 d_G(x)=t,\qquad d_G(y)=m+1\ (y\in Y),\qquad d_G(z)=n-2\ (z\in Z).
\]
The only nonedges are the pairs $xz$ with $z\in Z$ and the pairs inside $Y$.  For each such
pair the degree sum is exactly
\[
 t+(n-2)=2(m+1)=\frac{4n-4}{3}.
\]
Moreover every such pair has distance two.  Therefore the strict local condition with cutoff
$(4n-4)/3$ holds: no nonedge of rainbow distance at most two has degree sum strictly below the
cutoff.  On the other hand
\[
 N_G(x)=Y
\]
is independent, so $x$ is contained in no triangle.  The diagonal system is therefore not
rainbow vertex-pancyclic.
\end{proof}

Thus the strict cutoff in Theorem~\ref{thm:local-43} cannot in general be decreased to
$(4n-4)/3$.  Equivalently, in the integer formulation above, the theorem requires exactly the
next degree-sum level beyond this construction.

The following diagonal construction from~\cite{ChenChen} shows that
~\eqref{eq:LO} is strictly weaker than~\eqref{eq:global-ore}.

\begin{proposition}[The localization is strict]\label{prop:strict}
For every $n\ge6$, there is a system of $2$-connected graphs satisfying
\eqref{eq:LO} but not \eqref{eq:global-ore}.
\end{proposition}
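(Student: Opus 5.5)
I will give an explicit diagonal construction $G_1=\cdots=G_n=G$. In a diagonal system $d^c(u,v)\le2$ means exactly $d_G(u,v)\le2$, and $\Ec=E(G)$. So \eqref{eq:LO} reduces to the ordinary local Ore condition
\[
 d_G(u)+d_G(v)\ge n\qquad\text{whenever } d_G(u,v)=2 .
\]
On the other hand, \eqref{eq:global-ore} fails as soon as some nonadjacent pair, necessarily at distance at least three, has degree sum below $n$. The task is therefore to find, for every $n\ge6$, a $2$-connected graph $G$ with the following properties:
\begin{enumerate}
\item every pair at distance two has degree sum at least $n$;
\item some pair at distance at least three has small degree sum.
\end{enumerate}

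The natural candidate is a graph with two low-degree vertices $u,v$ far apart, joined through dense middles. Concretely, I would split $V\setminus\{u,v\}$ into a clique $K$ of order $n-2$. Fix two disjoint pairs $\{a_1,a_2\},\{b_1,b_2\}\subseteq K$, join $u$ to $a_1,a_2$, and join $v$ to $b_1,b_2$. Then $d(u)=d(v)=2$ and $d(u,v)=3$, so \eqref{eq:global-ore} fails, since $2+2<n$. The graph is $2$-connected: $K$ is $2$-connected and each of $u,v$ has two neighbours in $K$.

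For condition (i), the distance-two pairs are exactly the pairs $\{u,k\}$ with $k\in K\setminus\{a_1,a_2\}$, and symmetrically for $v$. Their degree sums are
\[
 d(u)+d(k)=2+(n-3)+[k\in\{b_1,b_2\}]\ge n-1,
\]
which is one short. So the construction must be adjusted so that each $K$-vertex at distance two from $u$ gains a degree. The obvious fix is to make the non-attachment vertices of $K$ larger in degree, but $K$ is already complete. Hence I would instead raise $d(u)$ and $d(v)$ to $3$: join $u$ to $a_1,a_2,a_3$ and $v$ to $b_1,b_2,b_3$, with the two triples disjoint. This requires $n-2\ge6$. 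A vertex $k\in K\setminus\{a_1,a_2,a_3\}$ then has degree at least $n-3$, and $3+(n-3)=n$ holds. The pair $\{u,v\}$ is still at distance three with degree sum $6<n$ once $n\ge7$.

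The main obstacle is the small orders. For $n=6,7$ the disjoint-triple construction does not fit or does not violate \eqref{eq:global-ore}. I would handle these either by a direct small example, for instance the attachment triples of $u$ and $v$ overlapping only minimally, or a hand-checked graph on six or seven vertices, or more uniformly by choosing $d(u)=d(v)=s$ with the attachment sets of size $s$. The requirement is then $s+(n-3)\ge n$ for the clique vertices, so $s\ge3$, together with $2s<n$ and $2s\le n-2$. This forces $n\ge8$, so $n=6,7$ need separate treatment. There the fallback is to allow $u$ and $v$ to share neighbours in a smaller core while keeping $d(u,v)\ge3$ impossible. That suggests using instead a pair at distance two that is \emph{adjacent} in no member but with degree sum below $n$, which \eqref{eq:LO} forbids. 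So for $n=6,7$ I expect to need a genuinely different small graph found by explicit search, and this is where I anticipate the real difficulty.
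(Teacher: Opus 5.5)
\textbf{Gap: the cases $n=6,7$ are not proved.} For $n\ge8$ your construction is correct. The reduction of \eqref{eq:LO} to the ordinary local Ore condition in a diagonal system is right. The distance-two pairs are exactly $\{u,k\}$ with $k\in K\setminus N(u)$, and symmetrically for $v$. The degree sums and $2$-connectivity check out. But the statement covers every $n\ge6$, and you leave $n=6,7$ open, expecting an ad hoc graph found by search.

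This comes from misreading your own computation. In the two-pair version, the shortfall $d(u)+d(k)=n-1$ occurs only for clique vertices $k$ adjacent to neither $u$ nor $v$. For $k\in\{b_1,b_2\}$ the sum is exactly $n$. So the repair is not to enlarge both attachment sets to triples. It is to remove the unattached clique vertices: choose $A=N(u)$ and $B=N(v)$ to partition $K$, with $|A|,|B|\ge2$. Then:
\begin{itemize}
\item every $k\in K$ has degree exactly $n-2$;
\item for $k\in B$, $d(u)+d(k)=|A|+n-2\ge n$;
\item for $k\in A$, $d(v)+d(k)=|B|+n-2\ge n$;
\item $u,v$ are at distance three with $d(u)+d(v)=|A|+|B|=n-2<n$, so \eqref{eq:global-ore} fails.
\end{itemize}
The graph is $2$-connected, since $K$ is a clique of order at least $4$ and each of $u,v$ has at least two neighbours in it. This needs only $n-2\ge4$, and it is exactly the paper's construction. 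In particular, your very first example already works for $n=6$, because the pairs $\{a_1,a_2\},\{b_1,b_2\}$ exhaust $K$. For $n=7$, take $|A|=2$ and $|B|=3$.

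Your closing idea of a violating pair at distance two is a dead end, as you half-notice. In a diagonal system \eqref{eq:LO} forbids such a pair. So the witness to the failure of \eqref{eq:global-ore} must be at distance at least three, which the partition construction supplies uniformly for all $n\ge6$.
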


\begin{proof}
Write $W=A\sqcupdot B$, where $|W|=n-2$ and $|A|,|B|\ge2$.
Let $W$ induce a clique, add two nonadjacent vertices $x,y$, and put
$N(x)=A$, $N(y)=B$. Call the resulting graph $G$, and take
$G_i=G$ for every $i$.
The clique remains connected after any one vertex is removed, and
each remaining vertex among $x,y$ retains a neighbor in it. Thus $G$
is $2$-connected. The pair $x,y$ has distance three and degree sum
$n-2<n$, so \eqref{eq:global-ore} fails.
All other nonedges are of the form $xb$, $b\in B$, or $ya$,
$a\in A$. Their degree sums are, respectively,
$|A|+n-2\ge n$ and $|B|+n-2\ge n$.
Therefore \eqref{eq:LO} holds. In this diagonal system rainbow
distance is ordinary distance, since any simple path can be assigned
distinct colors.
\end{proof}

\begin{proposition}[The degree-sum cutoff]\label{prop:sharp}
The constant $n$ in the strict inequality $\delta(u)+\delta(v)<n$ appearing in
\eqref{eq:LO} cannot in general be lowered to $n-1$, even when every member is
$2$-connected.
\end{proposition}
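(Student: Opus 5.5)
We need a 2-connected (diagonal) system that satisfies the modified condition, with cutoff $n-1$ in place of $n$, but fails the conclusion of Theorem~\ref{thm:main}. The plan is to take a diagonal system $G_1=\cdots=G_n=G$, so that rainbow distance equals ordinary distance and every simple path or cycle can be colored rainbow. The weakened condition then reads: $d_G(u)+d_G(v)\ge n-1$ whenever $d_G(u,v)=2$. We want $G$ to be $2$-connected and satisfy this, yet be non-Hamiltonian. Then the system has no rainbow Hamilton cycle, so it is neither rainbow pancyclic nor the balanced complete bipartite exception, which is Hamiltonian when $t\ge2$. This shows the conclusion of Theorem~\ref{thm:main} (and already Theorem~\ref{thm:ham-input}) fails.

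The natural candidate is the classical Ore-sharpness graph $G=K_{t,t+1}$ with $n=2t+1$ and $t\ge2$. The steps are as follows.
\begin{enumerate}
\item $G$ is $2$-connected, since both parts have size at least $2$.
\item $G$ is not Hamiltonian, since it is bipartite with unequal parts.
\item The only pairs at distance two are pairs inside a part. Two vertices in the part of size $t$ each have degree $t+1$, with sum $2t+2\ge n-1$. Two vertices in the part of size $t+1$ each have degree $t$, with sum $2t=n-1$.
\end{enumerate}
Hence the condition with strict cutoff $n-1$ holds, since no pair has degree sum below $n-1$. With cutoff $n$ it fails exactly on the larger part, consistent with Theorem~\ref{thm:main}. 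No rainbow Hamilton cycle exists, because any rainbow Hamilton cycle would be a Hamilton cycle of $G$.

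Since the exceptional outcome of Theorem~\ref{thm:main} requires even $n$ and here $n$ is odd, both outcomes fail, which is what ``cannot be lowered'' means. If one also wants even orders, an alternative is $K_1\vee(K_a\cup K_b)$. That graph is only $1$-connected, so for even $n$ one would instead try $\overline{K_2}\vee(K_a\cup K_b\cup K_c)$-type constructions. However, a single family for infinitely many $n$ suffices for ``in general'', so I will only use the odd family.

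The main obstacle is step 3, namely checking that the relevant pairs are exactly the distance-two pairs and that the degree sum $n-1$ is attained only on the larger side. This is a direct computation. The only subtlety is confirming that rainbow distance in a diagonal system equals graph distance: a $2$-path can always be assigned two distinct colors because $n\ge5$.
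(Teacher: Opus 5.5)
Your proposal is correct and is essentially the paper's own proof. The paper uses the same diagonal system $K_{t,t+1}$ with $n=2t+1$ and $t\ge2$. It observes that every system degree is at least $t$, so the weakened antecedent never applies, that bipartiteness with unequal parts excludes a (rainbow) Hamilton cycle, and that odd $n$ excludes the exceptional outcome.

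Your even-order digression is unnecessary. The suggested $\overline{K_2}\vee(K_a\cup K_b\cup K_c)$ family would not work anyway: two vertices in different cliques $K_a,K_b$ are at distance two with degree sum $n-c$, which forces $a=b=c=1$. This does not affect your argument.
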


\begin{proof}
Let $n=2t+1$ with $t\ge2$, and take every member to be
$K_{t,t+1}$ on the same bipartition. All members are $2$-connected
and every system degree is at least $t$. Thus no pair has degree
sum less than $2t=n-1$, and the condition with threshold $n-1$
holds. Nevertheless, the system has no odd cycle, in particular no
Hamilton cycle, and it is not the even-order exception of
Theorem~\ref{thm:main}.
\end{proof}

Theorem~\ref{thm:main} therefore localizes the global $[4,n]$-vertex-pancyclic Ore theorem of
Li, Wang and Yan~\cite{LiWangYan}, while Proposition~\ref{prop:vertex-sharp} shows why the
vertex-specific conclusion cannot in general be extended to length $3$ under~\eqref{eq:LO}.
Theorem~\ref{thm:local-43} gives the corresponding local analogue of their stronger full
vertex-pancyclicity theorem at the cutoff $4n/3-1$.

\end{document}